\providecommand{\tabularnewline}{\\}
\numberwithin{equation}{section}
\numberwithin{figure}{section}
\theoremstyle{plain}
\newtheorem{thm}{\protect\theoremname}[section]
\theoremstyle{plain}
\newtheorem{conjecture}[thm]{\protect\conjecturename}
\theoremstyle{remark}
\newtheorem*{acknowledgement*}{\protect\acknowledgementname}
\theoremstyle{definition}
\newtheorem{defn}[thm]{\protect\definitionname}
\theoremstyle{plain}
\newtheorem{fact}[thm]{\protect\factname}
\theoremstyle{remark}
\newtheorem{rem}[thm]{\protect\remarkname}
\theoremstyle{plain}
\newtheorem{lem}[thm]{\protect\lemmaname}
\theoremstyle{plain}
\newtheorem{prop}[thm]{\protect\propositionname}
\theoremstyle{remark}
\newtheorem{claim}[thm]{\protect\claimname}
\theoremstyle{plain}
\newtheorem{cor}[thm]{\protect\corollaryname}
\theoremstyle{definition}
\newtheorem{problem}[thm]{\protect\problemname}
\theoremstyle{definition}
\newtheorem{example}[thm]{\protect\examplename}
\theoremstyle{plain}
\newtheorem*{thm*}{\protect\theoremname}
\newlist{casenv}{enumerate}{4}
\setlist[casenv]{leftmargin=*,align=left,widest={iiii}}
\setlist[casenv,1]{label={{\itshape\ \casename} \arabic*.},ref=\arabic*}
\setlist[casenv,2]{label={{\itshape\ \casename} \roman*.},ref=\roman*}
\setlist[casenv,3]{label={{\itshape\ \casename\ \alph*.}},ref=\alph*}
\setlist[casenv,4]{label={{\itshape\ \casename} \arabic*.},ref=\arabic*}
\def\MR#1{}
\date{}
\setlist[casenv]{itemsep=2pt,leftmargin=*,align=left,itemindent=3em}
\theoremstyle{plain}
\newtheorem{claim}[thm]{\protect\claimname}
\theoremstyle{definition}
\newtheorem{rem}[thm]{Remark}
\DeclareMathOperator{\im}{Im}
\DeclareMathOperator{\len}{len}
\providecommand{\acknowledgementname}{Acknowledgement}
\providecommand{\casename}{Case}
\providecommand{\claimname}{Claim}
\providecommand{\conjecturename}{Conjecture}
\providecommand{\corollaryname}{Corollary}
\providecommand{\definitionname}{Definition}
\providecommand{\examplename}{Example}
\providecommand{\factname}{Fact}
\providecommand{\lemmaname}{Lemma}
\providecommand{\problemname}{Problem}
\providecommand{\propositionname}{Proposition}
\providecommand{\remarkname}{Remark}
\providecommand{\theoremname}{Theorem}
\begin{document}
\global\long\def\pplus{\overset{{\scriptscriptstyle \perp}}{\oplus}}%
\global\long\def\oton#1#2{#1_{1},\dots,#1_{#2}}%
\global\long\def\rk{\text{rk}}%
\global\long\def\tr{\text{tr}}%
\global\long\def\adj{\text{adj}}%
\global\long\def\nin{\notin}%
\global\long\def\id{\text{Id}}%
\global\long\def\makbil{\text{parallel}}%
\global\long\def\meridian{\text{meridian}}%
\global\long\def\sgn{\text{Sgn}}%
\global\long\def\fix{\text{Fix}}%
\global\long\def\hess{\text{Hess}}%
\global\long\def\ooton#1#2{#1_{0},\dots,#1_{#2}}%
\global\long\def\otherwise{\text{Otherwise}}%
\global\long\def\foton#1#2#3{#1\left(#2_{1}\right),\dots,#1\left(#2_{#3}\right)}%
\global\long\def\norm#1{\left\Vert #1\right\Vert }%
\global\long\def\nil{\text{Nill}}%
\global\long\def\fix{\text{Fix}}%
\global\long\def\spec{\text{Spec}}%
\global\long\def\ind{\text{Ind}}%
 
\global\long\def\hom{\mathrm{Hom}}%
\global\long\def\ob{\text{Ob}}%
\global\long\def\coker{\text{coker}}%
\global\long\def\rad{\text{Rad}}%
\global\long\def\supp{\text{Supp}}%
\global\long\def\aut{\text{Aut}}%
\global\long\def\gal{\text{Gal}}%
\global\long\def\ann{\text{Ann}}%
\global\long\def\mayervi{\text{Mayer-Vietoris}}%
\global\long\def\conv{\text{conv}}%
\global\long\def\diam{\text{diam}}%
\global\long\def\length{\text{len}}%
\global\long\def\tp{\text{tp}}%
\global\long\def\lcm{\text{lcm}}%
\global\long\def\core{\text{Core}}%
\global\long\def\ad{\text{ad}}%
\global\long\def\ord{\text{ord}}%
\global\long\def\rank{\text{rank}}%
\global\long\def\uindex#1#2{\overset{{\scriptscriptstyle #2}}{#1}}%
\global\long\def\nuindex#1#2{\overset{{\scriptscriptstyle #2}}{#1}\,^{-1}}%
\global\long\def\involution{\overline{{\scriptstyle \bigbox}}}%
\global\long\def\mold{{\scriptstyle \varodot}}%
\global\long\def\type{\text{Type}}%

\title{On algebraic extensions and decomposition\\
of homomorphisms of free groups}
\author{Noam Kolodner}
\maketitle
\begin{abstract}
We give a counterexample to a conjecture by Miasnikov, Ventura and
Weil, stating that an extension of free groups is algebraic if and
only if the corresponding morphism of their core graphs is onto, for
every basis of the ambient group. In the course of the proof we present
a partition of the set of homomorphisms between free groups which
is of independent interest.
\end{abstract}

\section{Introduction}

The purpose of this paper is to develop new machinery for the study
of morphisms between core graphs associated with free groups (see
definitions in §\ref{sec:Preliminaries}). For this purpose, we construct
a category which enriches that of free groups and enables to study
surjectivity with respect to arbitrary bases. The main application
we give is a counterexample to a conjecture by Miasnikov, Ventura,
and Weil \cite{MR2395796} which was revised by Parzanchevski and
Puder \cite{MR3211804}.
\begin{conjecture}[\cite{MR2395796}]
\label{conj:old}Let $F_{Y}$ be a free group on the set $Y$, and
let $H\leq K\leq F_{Y}$ be subgroups. If the morphism between the
associated core-graphs $\Gamma_{X}\left(H\right)\to\Gamma_{X}\left(K\right)$
is surjective for every basis $X$ of $F_{Y}$, then $K$ is an algebraic
extension of $H$.
\end{conjecture}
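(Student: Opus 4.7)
The statement is a conjecture which the paper's abstract announces as being refuted, so what I actually need to establish is its negation. The one-way implication ``algebraic $\Rightarrow$ surjective core-graph morphism for every basis'' is the content of the result of Miasnikov--Ventura--Weil that suggested the conjecture in the first place, so I need not revisit it; the work lies in exhibiting $H\le K\le F_Y$ for which the core-graph morphism $\Gamma_X(H)\to\Gamma_X(K)$ is surjective for every basis $X$ of $F_Y$, while $K$ still admits a proper intermediate free factor $H\le M\lneq K$.

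To keep the example tractable I would search in low rank -- say $F_Y$ of rank three and $K$ of rank two -- where Nielsen equivalence of bases is well-understood and Whitehead-type enumerations are feasible. The guiding heuristic is to pick $M$ so that $M$ is a free factor of $K$ but not a free factor of $F_Y$: then the free decomposition of $K$ cannot be aligned with any single basis of $F_Y$, and every change of basis should force the non-$M$ part of the core graph of $K$ to be swept up by the image of $\Gamma_X(H)$. The temptation to take $M$ cyclic and $H$ a proper power inside $M$ is the first thing I would try, since then the non-algebraicity of $K/H$ is transparent and the combinatorics of the core graphs remain manageable.

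The main obstacle is that the required surjectivity quantifies over an infinite set of bases of $F_Y$, and one cannot test them one by one. This is precisely where the machinery announced in the abstract -- the category enriching free groups and the associated partition of $\hom(F_A,F_B)$ -- is needed. My plan is to use this partition to stratify basis changes (viewed as elements of $\aut(F_Y)$ acting on the pair $(H,K)$) into finitely many combinatorial types, and, on each stratum, to describe the image of the resulting core-graph morphism purely in terms of the combinatorial data of that stratum. Universal surjectivity then reduces to a finite check, one representative per class. I expect this reduction, rather than the construction of the concrete pair $(H,K)$, to be the most delicate step, because the adequacy of the counterexample is what forces the partition to be sharp enough: any type that fails to control the image of the core-graph morphism would leave open an unverified basis and thereby invalidate the construction.
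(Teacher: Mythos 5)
Your high-level outline does match the paper's: construct a concrete pair $H\le K$ with $H$ inside a proper free factor of $K$ (so that $K$ is not algebraic over $H$), then use a partition of the homomorphism set to verify surjectivity of $\Gamma_X(H)\to\Gamma_X(K)$ over all bases $X$ via finitely many combinatorial strata. But as written this is a research plan rather than a proof, and two of its guiding heuristics point the wrong way. The paper's counterexample lives in rank two, not three: $H=\langle bbaba^{-1}\rangle<K=\langle b,aba^{-1}\rangle$ inside $F_{\{a,b\}}$, where $H$ itself is a cyclic free factor of $K$ and $bbaba^{-1}$ is \emph{not} a proper power — in fact the paper observes that proper-power words such as $x^{3}y^{2}$ fail the finiteness property on which the whole method depends, so your suggestion to take $H$ a proper power inside a cyclic $M$ would lead you away from any workable example.

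More importantly, the step you present as routine — ``universal surjectivity then reduces to a finite check, one representative per class'' — is exactly the nontrivial content, and it is not automatic. The recursive splitting of $\hom((Y,N_Y),(X,W_X))$ produces new ambiguous subproblems indefinitely unless the graph $\Gamma_Y(H)$ has what the paper calls \emph{stencil-finiteness}: the set $\{[\core\mathcal{F}_{\varphi}\Gamma_Y(H)]\}$ over all non-degenerate $\varphi$ must have finitely many maximal elements in the relevant poset. That this holds for $bbaba^{-1}$ (a Baumslag--Solitar relator $b^{l}ab^{m}a^{-1}$) is the delicate combinatorial fact that makes the recursion close up into a bounded tree of cases; without choosing a word with this property, the algorithm need not terminate and there is no proof. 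A related gap: you stratify over $\aut(F_Y)$, but the folding morphisms that drive the recursion are emphatically not automorphisms, so the framework has to be the full category of non-degenerate homomorphisms $F_Y\to F_X$ (with $X$ countably infinite) from the outset — which is also what lets a single $F_2$ example refute all three versions of the conjecture simultaneously.
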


The converse is indeed true: if $H\leq K$ is an algebraic extension,
then $\Gamma_{B}(H)\rightarrow\Gamma_{B}\left(K\right)$ is indeed
onto for any basis $B$. In \cite{MR3211804}, Parzanchevski and Puder
constructed a counterexample for the conjecture in $F_{2}$, the free
group on two generators. Their proof relies heavily on idiosyncrasies
of the automorphism group of $F_{2}$, and they conjectured that these
are the only obstructions, revising the conjecture in two ways:
\begin{conjecture}[\cite{MR3211804}]
\label{conj:main}Let $H\leq K\leq F_{Y}$. If for every free extension
$F'$ of $F_{Y}$ and for every basis $X$ of $F'$ the morphism $\Gamma_{X}\left(H\right)\to\Gamma_{X}\left(K\right)$
is onto then $K$ is algebraic over $H$.
\end{conjecture}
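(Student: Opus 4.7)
The natural plan is to proceed by contrapositive: assume $K$ is not algebraic over $H$ and produce a free extension $F'\supseteq F_Y$ together with a basis $X$ of $F'$ for which $\Gamma_X(H)\to\Gamma_X(K)$ fails to be onto. Non-algebraicity means there is a proper intermediate free factor, i.e.\ a subgroup $L$ with $H\le L\lneq K$ such that $L$ is a free factor of $K$; write $K=L\ast M$ with $M\neq 1$.

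The first step is to build a basis of $F'$ adapted to the decomposition $K=L\ast M$. Fix a basis $B_L$ of $L$ and a basis $B_M$ of $M$; their union is a basis of $K$. Using Nielsen--Schreier one completes $B_L\cup B_M$ to a basis of an ambient free group, and by enlarging $F_Y$ to a free extension $F'$ (adjoining free generators in a controlled way) one gains enough room to make this completion honest, producing a genuine basis $X$ of $F'$. The enlargement to $F'$ is essential, since inside $F_Y$ itself one may not be able to realize $B_L\cup B_M$ as part of any basis.

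The second step, which I expect to be the main obstacle, is to extract from $X$ an edge of $\Gamma_X(K)$ that does not lie in the image of $\Gamma_X(H)$. The natural candidate is an edge labelled by a generator $m\in B_M$: since $H\le L$, every element of $H$ is a product of elements of $B_L$, so no edge of $\Gamma_X(H)$ should carry the letter $m$ after Stallings folding. The danger is that the folding producing $\Gamma_X(K)$ could identify an $m$-labelled edge with an edge coming from $B_L$, or more subtly, that the decomposition $K=L\ast M$ does not survive the folding in a way that leaves an isolated $m$-edge visible in the core graph. The extra room provided by $F'$ is meant to prevent such identifications, and making this rigorous would require a structural lemma describing how foldings interact with free-factor decompositions and with changes of basis in the ambient group.

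Once an uncovered $m$-edge is located, the morphism $\Gamma_X(H)\to\Gamma_X(K)$ cannot be surjective, contradicting the hypothesis; hence $K$ must be algebraic over $H$. The success of the whole argument hinges on the middle step: given any proper intermediate free factor $L$, can one always find a free extension $F'\supseteq F_Y$ and a basis $X$ of $F'$ that exhibits a generator of a complementary factor $M$ as a missing edge in the core graph? This is precisely a question about the interplay between Stallings foldings, free-factor decompositions, and base change, and would be the technical heart of any proof.
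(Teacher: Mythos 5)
This statement is not something the paper proves --- it is the conjecture that the paper \emph{refutes}. Theorem \ref{thm:main} exhibits $H=\langle bbaba^{-1}\rangle$ and $K=\langle b,aba^{-1}\rangle$ with $H$ a proper free factor of $K$ (in the basis $\{b,aba^{-1}\}$ of $K$ one has $K=H\ast\langle b\rangle$), so $K$ is not algebraic over $H$; yet $\Gamma_X(\varphi(H))\to\Gamma_X(\varphi(K))$ is onto for every non-degenerate $\varphi\colon F_{\{a,b\}}\to F_X$, and this class of $\varphi$ contains the restriction of every change of basis of every free extension of $F_{\{a,b\}}$. So the hypothesis of the conjecture holds while its conclusion fails, and no proof of it can be correct.

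The concrete point where your argument breaks is the very first step, not the ``middle step'' you flag. You propose to complete the basis $B_L\cup B_M$ of $K$ to a basis $X$ of some free extension $F'$ of $F_Y$, arguing that enlarging the ambient group gives ``enough room''. It does not: for $F'=F_Y\ast F_Z$, a subgroup $K\leq F_Y$ is a free factor of $F'$ if and only if it is already a free factor of $F_Y$, and more generally a family of elements of $F_Y$ extends to a basis of $F'$ iff it extends to a basis of $F_Y$ (project $F'\to F_Y$ by killing the new generators and note this retraction fixes $K$). Adjoining fresh free generators never creates new free factors of the old group. In the counterexample, $K=\langle b,aba^{-1}\rangle$ is not a free factor of $F_{\{a,b\}}$, so neither $\{bbaba^{-1},b\}$ nor any other basis of $K$ is part of a basis of any free extension, and there is no $X$ in which the generator of the complementary factor $M$ appears as a letter invisible to $H$. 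Once that fails, the ``missing $m$-edge'' can always be covered after folding --- and the whole content of the paper (the FGR decomposition of \S\ref{subsec:Partition-of-Hom} and the case analysis of \S\ref{sec:The-counterexample}) is a verification that for this $H\leq K$ it is covered in every basis of every free extension.
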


\begin{conjecture}[\cite{MR3211804}]
\label{conj:Xge3}The original conjecture holds for $H\leq K\leq F_{Y}$
such that\ $\left|Y\right|>2$.
\end{conjecture}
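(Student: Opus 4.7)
The plan is to prove the contrapositive: assuming $|Y|>2$ and that $K$ is not algebraic over $H$, produce a basis $X$ of $F_Y$ for which $\Gamma_X(H)\to\Gamma_X(K)$ fails to be surjective. Non-algebraicity supplies a proper free factor $L\lneq K$ with $H\le L$. I would begin by extending a basis of $L$ to a basis of $K$ and then to a basis $Z$ of $F_Y$, so that at least one letter $w\in Z$ lies in $K\setminus L$. In this basis the edges of $\Gamma_Z(K)$ labelled by $w$ form a distinguished piece of the graph carrying information about $K\setminus L$, and the natural first hope is that these edges are missed by $\Gamma_Z(H)$.

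The hypothesis, however, demands surjectivity for \emph{every} basis, so the substance of the argument is to exhibit a basis that continues to witness non-surjectivity after all automorphisms of $F_Y$ have been applied. This is where the enriched category of free-group morphisms constructed earlier in the paper would enter: it should let one track, as $X$ varies, which edges of $\Gamma_X(K)$ are forced to lie outside $\im\Gamma_X(H)$. The key combinatorial input is a Whitehead-style cut. For $|Y|>2$, the Whitehead graph of any $w\in K\setminus L$, relative to a basis extending one of $L$, has enough vertices to admit a non-trivial partition whose support is disjoint from $L$; this partition manifests as a turn in $\Gamma_Z(K)$ that no word of $L$ traverses, and hence cannot appear in $\Gamma_Z(H)$.

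The principal obstacle is the orbit analysis: one must show that the $\aut(F_Y)$-orbit of the pair $(\Gamma(H),\Gamma(K))$ contains, for $|Y|>2$, a representative witnessing non-surjectivity whenever the extension is non-algebraic. This is precisely where $F_2$ fails---Parzanchevski and Puder's counterexample exploits the fact that every Whitehead automorphism of $F_2$ separating $L$ from its complement is forced by the small rank to also disturb $H$. When $|Y|>2$ there are enough free letters to apply a Whitehead automorphism supported entirely on the complement of $L$ inside $F_Y$, leaving $L$, and therefore $H$, pointwise fixed. Combined with the cut of the previous paragraph this produces an explicit edge of $\Gamma_X(K)$ absent from $\im\Gamma_X(H)$. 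Formalising this independence via the enriched category, and verifying that the missing edge persists under all remaining basis changes, is the heart of the proof and the step where the hypothesis $|Y|>2$ is essential.

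Finally, one reduces from the case $\rank K-\rank L=1$ to the general case by peeling off a single free direction of $K$ over $L$ at a time; at each stage the hypothesis $|Y|>2$ is retained because the ambient group is unchanged, and the iterated argument accumulates a basis $X$ exhibiting a missing edge, completing the contradiction with the standing hypothesis that $\Gamma_X(H)\to\Gamma_X(K)$ is surjective for every basis.
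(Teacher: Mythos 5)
There is a fundamental problem: the statement you are trying to prove is a \emph{conjecture} that this paper exists to refute, not a theorem it proves. Theorem \ref{thm:main} exhibits $H=\langle bbaba^{-1}\rangle$ and $K=\langle b,aba^{-1}\rangle$ with $H$ a proper free factor of $K$ (so $K$ is not algebraic over $H$), yet $\Gamma_X(\varphi(H))\to\Gamma_X(\varphi(K))$ is surjective for \emph{every} non-degenerate homomorphism $\varphi\colon F_{\{a,b\}}\to F_X$ into an arbitrary free group. In particular, embedding $F_{\{a,b\}}\le F_Y$ with $|Y|>2$ and restricting any automorphism of $F_Y$ to $F_{\{a,b\}}$ gives such a $\varphi$, so $\Gamma_X(H)\to\Gamma_X(K)$ is onto for every basis $X$ of $F_Y$ while the extension is not algebraic. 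Hence Conjecture \ref{conj:Xge3} is false and no proof along your lines (or any lines) can be completed.

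The concrete failure points in your sketch are exactly the steps the counterexample defeats. First, the claim that a turn (2-path) of $\Gamma_Z(K)$ not traversed by words of $L$ ``cannot appear in $\Gamma_Z(H)$'' conflates two different things: surjectivity of the core-graph morphism is about covering edges and vertices of $\Gamma_X(K)$ by the image of $\Gamma_X(H)$, and this is not obstructed by $H$ lying inside a proper free factor $L<K$ --- in the counterexample $H$ itself is a proper free factor of $K$, yet its core graph maps onto that of $K$ in every basis. Second, the ``orbit analysis'' step, where you fix $L$ pointwise by a Whitehead automorphism supported on extra letters and assert that a missing edge ``persists under all remaining basis changes,'' is precisely the assertion that cannot be verified: the paper's recursive FGR decomposition (Sections \ref{sec:The-category-FGR}--\ref{sec:The-counterexample}) shows that for this $H\le K$ every non-degenerate image has a surjective core-graph morphism, so no basis of any ambient $F_Y$ witnesses non-surjectivity. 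The extra rank $|Y|>2$ gives you more automorphisms to play with, but it also gives more bases in which surjectivity must be checked, and the counterexample survives all of them.
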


In this paper, we show that both revised forms of the conjecture are
false. In fact, we prove a stronger statement:
\begin{thm}
\label{thm:main}The extension $\langle bbaba^{-1}\rangle<\langle b,aba^{-1}\rangle$
is not algebraic, but for every morphism $\varphi\colon F_{\{a,b\}}\rightarrow F_{X}$
with $X$ arbitrary and $\varphi(a),\varphi(b)\neq1$, the graph morphism
$\Gamma_{X}\left(\varphi\left(\langle bbaba^{-1}\rangle\right)\right)\rightarrow\Gamma_{X}\left(\varphi(\langle b,aba^{-1}\rangle)\right)$
is surjective.
\end{thm}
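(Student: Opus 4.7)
The theorem splits into two essentially independent claims. The non-algebraicity is short; the universal surjectivity is where the substance lies.

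\emph{Non-algebraicity.} Set $K:=\langle b,\,aba^{-1}\rangle$ and $c:=aba^{-1}$. A direct inspection of the Stallings graph $\Gamma_{\{a,b\}}(K)$ shows that $K$ is free of rank two with basis $\{b,c\}$. The element $bbaba^{-1}=b^{2}c$ has exponent sum $1$ in the generator $c$, and a single Nielsen move $c\mapsto b^{2}c$ shows that $\{b,\,b^{2}c\}$ is also a basis of $K$. Consequently $\langle bbaba^{-1}\rangle$ is itself a proper free factor of $K$, and the extension is not algebraic.

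\emph{Surjectivity: setup.} Given $\varphi$ with $u:=\varphi(a),\,v:=\varphi(b)\neq 1$, put $L:=\langle v,\,uvu^{-1}\rangle$ and $w:=v^{2}uvu^{-1}$. By definition, $\Gamma_X(L)$ is the Stallings folding of the wedge at a basepoint $p$ of two immersed loops---one reading $v$, the other reading $uvu^{-1}$---so $\Gamma_X(L)$ is covered by the images of these two loops. The graph morphism $\Gamma_X(\langle w\rangle)\to\Gamma_X(L)$ is the one obtained by reading the reduced form of $w$ at $p$, so surjectivity is equivalent to: the $w$-path at $p$ traverses every edge of $\Gamma_X(L)$.

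\emph{The argument.} The unreduced concatenation $v\cdot v\cdot u\cdot v\cdot u^{-1}$ visibly traces the $v$-loop twice and the $uvu^{-1}$-loop once, and hence covers $\Gamma_X(L)$. Reducing $w$ only strips mutually-cancelling pairs of edge traversals, and such cancellations can only occur at the junctions $v\mid v$, $v\mid u$, and cyclically $u^{-1}\mid v$. The two pillars of the argument are: (i) after the $v\mid v$ cancellation, the reduced $v^{2}$ still traces every edge of the $v$-loop (which is a lollipop $\alpha v_{0}\alpha^{-1}$, with $v_{0}$ now wound twice around its hub); and (ii) any edge potentially lost at a $v\mid u$ or $u^{-1}\mid v$ cancellation is traced redundantly, either by the remaining copy of $v$ in $v^{2}$ or by the inner $v$ of $uvu^{-1}$. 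To carry this out formally, I would invoke the partition of $\hom(F_{\{a,b\}},F_X)$ by cancellation type developed earlier in the paper: each type fixes the common prefix/suffix data of $u^{\pm 1}$ and $v^{\pm 1}$, produces an explicit lollipop- or theta-shaped model for $\Gamma_X(L)$, and reduces the surjectivity check to a finite combinatorial verification.

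\emph{Expected obstacle.} The delicate case is large common structure between $u$ and $v$---long shared initial or terminal segments, or $u$ and $v$ being powers of a common element---where the two generating loops of $L$ fuse heavily in $\Gamma_X(L)$ and several junction cancellations occur simultaneously. Ruling out the pathological possibility that a compound cancellation strips every visit to some edge is exactly what the type-partition is engineered to handle: each type constrains the simultaneous cancellations enough to exhibit, edge by edge, a surviving traversal in the reduced $w$.
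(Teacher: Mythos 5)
Your proof of non-algebraicity is fine and, if anything, more explicit than the paper's one-line remark: you exhibit $\{b,\,b^{2}c\}$ as a basis of $K$ via a Nielsen move, directly showing $\langle bbaba^{-1}\rangle$ is a proper free factor.

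The surjectivity half, however, is only a strategy sketch, and the gap is exactly where the theorem actually lives. You correctly identify that the unreduced word $vvuvu^{-1}$ traces every edge of $\Gamma_X(L)$ and that the only danger lies in cancellations at the three junctions. But your ``pillar (ii)'' --- that every edge potentially stripped at a $v\mid u$ or $u^{-1}\mid v$ junction is ``traced redundantly'' --- is a hope, not an argument. It is precisely what must be verified, and it is not true for arbitrary words: the paper itself points out (Section 6) that the analogous recursive process \emph{fails to terminate} for $x^{3}y^{2}$, so one cannot appeal to the partition machinery as if it automatically delivered a finite verification. For $bbaba^{-1}$, termination is a nontrivial fact (essentially a stencil-finiteness property due to Meskin/Newman for Baumslag--Solitar relators), and establishing it is the whole content of Section 5: a change of coordinates from $\{a,b\}$ to $\{\alpha,\beta\}$ with $\sigma(\alpha)=b$, $\sigma(\beta)=aba^{-1}$ exploiting the conjugacy $\varphi(b)\sim\varphi(aba^{-1})$ to parametrize as $\varphi(b)=\overline{x}\,\overline{u}\,\overline{v}\,\overline{x}^{-1}$, $\varphi(aba^{-1})=\overline{y}\,\overline{v}\,\overline{u}\,\overline{y}^{-1}$; the resulting eight initial cases; and then a recursive FGR-splitting (using the triangle rule and explicit FGR-isomorphisms between subcases) that one must actually run until every branch lands in a stencil space or in a previously encountered case. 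You neither perform this casework nor show the recursion closes up, so the proof is incomplete where it matters most. In particular your phrase ``reduces the surjectivity check to a finite combinatorial verification'' asserts the conclusion; the paper's contribution is to carry out and certify that finiteness.
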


Our result naturally raises the question what is the algebraic interpretation
(if one exists at all) of the property of being ``onto in all bases'',
as the one suggested by the conjectures above turns out to be false.
In addition, the methods which we develop for the analysis of our
counterexample are of independent interest: The first idea is extending
the scope of inspection from the category of automorphisms to a larger
category of homomorphisms called FGR (``Free Groups with Restrictions''),
which is defined in §\ref{sec:The-category-FGR}. In the new category
we present a recursive decomposition of morphisms, which in turn enables
ones to verify whether the desired property of graphs holds for all
FGR homomorphisms. A second point of interest is the observation of
a property of the word $bbaba^{-1}$ which ensures that the recursive
process ends. This property is defined in §\ref{sec:TBD-finiteness},
and leads to further questions and applications, for example to solving
some forms of equations in free groups.
\begin{acknowledgement*}
I am grateful to Ori Parzanchevski for his guidance and for carefully
reading and commenting on a draft of this paper. I would also like
to thank the referee for the helpful comments. This research was supported
by ISF grant 1031/17 of Ori Parzanchevski.
\end{acknowledgement*}

\section{\label{sec:Preliminaries}Preliminaries}

We present the definitions we use in the paper, merging the language
of \cite{MR1882114} and \cite{MR695906}.
\begin{defn}[Graphs]
We use graphs in the sense of Serre \cite{MR0476875}: A \emph{graph}
$\Gamma$ is a set $V\left(\Gamma\right)$ of vertices and a set $E\left(\Gamma\right)$
of edges with a function $\iota\colon E\left(\Gamma\right)\to V\left(\Gamma\right)$
called the initial vertex map and an involution $\involution\colon E\left(\Gamma\right)\to E\left(\Gamma\right)$
with $\overline{e}\neq e$ and $\overline{\overline{e}}=e$. A \emph{graph
morphism} $f\colon\Gamma\to\Delta$ is a pair of set functions $f^{E}\colon E\left(\Gamma\right)\to E\left(\Delta\right)$
and $f^{V}\colon V\left(\Gamma\right)\to V\left(\Delta\right)$ that
commute with the structure functions. A \emph{path }in $\Gamma$ is
a finite sequence $\oton en\in E\left(\Gamma\right)$ with $\iota\left(\overline{e}_{k}\right)=\iota\left(e_{k+1}\right)$
for every $1\leq k<n$. The path is \emph{closed} if $\iota\left(\overline{e}_{n}\right)=\iota\left(e_{1}\right)$,
and \emph{reduced} if $e_{k+1}\neq\overline{e}_{k}$ for all $k$.
All the graphs in the paper are assumed to be connected unless specified
otherwise, namely, for every $e,f\in E\left(\Gamma\right)$ there
is a path $e,\oton en,f$.
\end{defn}

\begin{defn}[Labeled graphs]
Let $X$ be a set and let $X^{-1}$ be the set of its formal inverses.
We define $R_{X}$ to be the graph with $E\left(R_{X}\right)=X\cup X^{-1}$,
$V\left(R_{X}\right)=\left\{ *\right\} $, $\overline{x}=x^{-1}$
and $\iota\left(x\right)=*.$ An \emph{$X$-labeled graph }is a graph
$\Gamma$ together with a graph morphism $l\colon\Gamma\to R_{X}$.
A morphism of $X$-labeled graphs $\Gamma$ and $\Delta$ is a graph
morphism $f\colon\Gamma\to\Delta$ that commutes with the label functions.
Let $\mathcal{P}\left(\Gamma\right)$ be the set of all the paths
in $\Gamma$, let $F_{X}$ be the free group on $X$ and let $P=\oton en$
be a path. The edge part of the label function $l^{E}\colon E\left(\Gamma\right)\to E\left(R_{X}\right)$
can be extended to a function $l\colon\mathcal{P}\left(\Gamma\right)\to F_{X}$
by the rule $l\left(P\right)=l\left(e_{1}\right)\ldots\left(e_{n}\right)$.

A \emph{pointed }$X$-labeled graph is an $X$-labeled graph that
has a distinguished vertex called the base point. A morphism of a
pointed labeled graph sends the base point to the base point. This
constitutes a category called $X\text{-Grph}$. For a pointed $X$-labeled
graph $\Gamma$ we define $\pi_{1}\left(\Gamma\right)$ to be 
\[
\pi_{1}(\Gamma)=\left\{ l\left(P\right)\in F_{X}\,|\,P\text{ is a closed path beginning at the base point}\right\} .
\]
\end{defn}

\begin{defn}
Let $F_{X}$ be the free group on the set $X$. We define a category
$\text{Sub}\left(F_{X}\right)$, whose objects are subgroups $H\leq F_{X}$
and there is a unique morphism in $\hom\left(H,K\right)$ iff $H\leq K$.
It is easy to verify that $\pi_{1}$ is a functor from $X\text{-Grph}$
to $\text{Sub}\left(F_{X}\right)$.
\end{defn}

Note: The functor $\pi_{1}$ defined above is not the fundamental
group of $\Gamma$ as a topological space. Rather, if one views $\Gamma$
and $R_{X}$ as topological spaces and $l$ as a continuous function,
then $\pi_{1}$ is the image of the fundamental group of $\Gamma$
in that of $R_{X}$ under the group homomorphism induced by $l$.
\begin{defn}[Folding]
A labeled graph $\Gamma$ is \emph{folded }if $l\left(e\right)\neq l\left(f\right)$
holds for every two edges $e,f\in E\left(\Gamma\right)$ with $\iota\left(f\right)=\iota\left(e\right)$.
We notice that there is at most one morphism between two pointed folded
labeled graphs. If $\Gamma$ is not folded, there exist $e,f\in E\left(\Gamma\right)$
such that\ $\iota\left(e\right)=\iota\left(f\right)$ and $l\left(e\right)=l\left(f\right)$;
Let $\Gamma'$ be the graph obtained by identifying the vertex $\iota\left(\overline{e}\right)$
with $\iota\left(\overline{f}\right)$, the edges $e$ with $f$ and
$\overline{e}$ with $\overline{f}$. We say $\Gamma'$ is the result
of \emph{folding} $e$ and $f$. The label function $l$ factors through
$\Gamma'$, yielding a label function $l'$ on $\Gamma'$, and we
notice that $\pi_{1}\left(\Gamma\right)=\pi_{1}\left(\Gamma'\right)$.
\end{defn}

\begin{defn}[Core graph]
A \emph{core graph} $\Gamma$ is a labeled, folded, pointed graph
such that\ every edge in $\Gamma$ belongs in a closed reduced path
around the base point. Is case of finite graphs this is equivalent
to every $v\in V\left(\Gamma\right)$ having $\deg(v):=|\iota^{-1}(v)|>1$
except the base point which can have any degree.
\end{defn}

\begin{defn}
Let $X\text{-CGrph}$ be the category of connected, pointed, folded,
$X$-labeled core graphs. Define a functor $\Gamma_{X}\colon\text{Sub}\left(F_{X}\right)\to X\text{-CGrph}$
that associates to the subgroup $H\leq F_{X}$ a graph $\Gamma_{X}\left(H\right)$
(which is unique up to a unique isomorphism) such that\ $\pi_{1}(\Gamma_{X}(H))=H$.
\end{defn}

\begin{fact}[\cite{MR695906,MR1882114}]
The functors $\pi_{1}$ and $\Gamma$ define an equivalence between
the categories $X\text{-CGrph}$ and $\text{Sub}\left(F_{X}\right)$.
\end{fact}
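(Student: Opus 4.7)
The plan is to construct $\Gamma_X$ explicitly via Stallings foldings and then verify the two composition identities $\pi_{1}\circ\Gamma_X\simeq\mathrm{Id}_{\text{Sub}(F_X)}$ and $\Gamma_X\circ\pi_{1}\simeq\mathrm{Id}_{X\text{-CGrph}}$. A decisive simplification is that both categories are \emph{thin}: $\text{Sub}(F_X)$ by definition, and $X\text{-CGrph}$ by the observation already made that there is at most one morphism between two pointed folded labeled graphs. Consequently, naturality squares and functoriality on morphisms reduce to checking that the object-level bijection is order-preserving in both directions; no coherence diagrams need to be verified by hand.

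For the construction of $\Gamma_X(H)$, given $H\leq F_X$, I would choose any generating set $\{h_\alpha\}$ of $H$ and form the bouquet $B_H$: a wedge at a basepoint $*$ of oriented $X$-labeled loops, one for each $h_\alpha$, subdivided so that the labels along the loop spell out the reduced word $h_\alpha$. This is pointed, connected and $X$-labeled, though generically unfolded. Applying Stallings folds greedily, each fold preserves $\pi_{1}$ by the definition already given; in the finitely generated case the procedure terminates since the number of edges strictly decreases, and in general one passes to the directed colimit over finitely generated $H'\leq H$. Finally, prune edges not lying in any closed reduced path at $*$. This last step does not disturb $\pi_{1}$ because an edge $e$ outside the core is contained in a subtree that is attached to the rest of the graph at a single vertex, so any closed reduced path at $*$ which entered this subtree would have to backtrack along $e$, contradicting reducedness. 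The resulting graph is a core graph by construction, and this defines $\Gamma_X(H)$.

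To check $\pi_{1}(\Gamma_X(H))=H$: the inclusion $H\subseteq\pi_{1}(\Gamma_X(H))$ is clear because each $h_\alpha$ is the label of its own loop in $B_H$ and that loop survives (possibly with identifications) the quotient to $\Gamma_X(H)$; the reverse inclusion is immediate from the fact that $\pi_{1}(B_H)\subseteq H$ together with $\pi_{1}$-invariance of folding and pruning. Conversely, given a core graph $\Gamma$ with $H:=\pi_{1}(\Gamma)$, I would construct the canonical morphism $\Gamma\to\Gamma_X(H)$ by choosing a spanning tree $T$ of $\Gamma$, reading off a generating set of $H$ from the non-tree edges, and identifying $B_H$ with the bouquet obtained by contracting $T$ to the basepoint; this identification commutes with folding, so it descends to the desired morphism. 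Applying the uniqueness of morphisms between pointed folded graphs in both directions, together with the core hypothesis (which prevents the target from containing extraneous edges), yields that this morphism is an isomorphism.

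Functoriality on morphisms follows easily: an inclusion $H\leq K$ in $\text{Sub}(F_X)$ yields the unique morphism $\Gamma_X(H)\to\Gamma_X(K)$ by the usual lift-along-labels argument in the folded target $\Gamma_X(K)$, and any morphism $\Gamma\to\Delta$ in $X\text{-CGrph}$ sends closed basepoint-paths to closed basepoint-paths with equal label, hence induces $\pi_{1}(\Gamma)\subseteq\pi_{1}(\Delta)$. The main obstacle I expect is the infinitely generated case: proving that the greedy folding process has a well-defined colimit independent of the order of folds, and that pruning extends sensibly to the colimit. This is dispatched by presenting $\Gamma_X(H)$ as the directed colimit $\varinjlim_{H'\leq H,\,\text{f.g.}}\Gamma_X(H')$ and observing that both $\pi_{1}$ and the core-property are preserved under such colimits of pointed folded graphs.
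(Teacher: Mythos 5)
Your route is genuinely different from the paper's. The paper disposes of this Fact by covering-space theory: $\text{Sub}(F_X)$ is equivalent to the category of connected pointed covers of $R_X$ (since $R_X$ has a universal cover), and core graphs correspond to covers via a unique completion of a core graph to a cover and a unique extraction of a core from a cover. You instead build $\Gamma_X(H)$ directly by Stallings folding of a bouquet $B_H$ and use the thinness of both categories (which the paper indeed records for folded pointed graphs) to reduce the equivalence to an order-isomorphism; the colimit over finitely generated subgroups handles the non-finitely-generated case. This is the standard combinatorial alternative: it buys a self-contained, essentially algorithmic construction, at the price of verifying by hand what covering-space theory gives for free, namely the existence of the comparison morphisms in both directions. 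Your observation that thinness removes all coherence checks is correct and a genuine simplification.

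One step, as written, does not work. You cannot ``identify $B_H$ with the bouquet obtained by contracting $T$'': contracting a spanning tree is not a morphism of labeled graphs, and $\Gamma/T$ has single-letter loops, not loops spelling the reduced words $h_\alpha$. What is true, and what your argument needs, is a labeled-graph morphism $B_H\to\Gamma$ sending the loop spelling $h_\alpha$ to the closed path of $\Gamma$ consisting of the tree geodesic, the non-tree edge, and the reverse geodesic; this path has no backtracks, and since $\Gamma$ is folded it spells exactly the reduced word $h_\alpha$, so the morphism exists edge by edge. Factoring it through the folds of $B_H$ and restricting to the core yields a morphism $\Gamma_X(H)\to\Gamma$ --- note this is the opposite direction to the ``canonical morphism $\Gamma\to\Gamma_X(H)$'' you claim to construct there. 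The direction $\Gamma\to\Gamma_X(H)$ is precisely the lift-along-labels argument that you invoke only for functoriality: send a vertex $v$ to the endpoint of the path in $\Gamma_X(H)$ reading the label of a reduced path from the base point to $v$ (a prefix of a reduced word of $H$, because $\Gamma$ is a folded core graph), and check well-definedness using the fact that two such labels differ by an element of $\pi_1(\Gamma)=H$ together with foldedness of $\Gamma_X(H)$. That well-definedness is the one place where real content enters --- it is the combinatorial shadow of the lifting criterion that powers the paper's covering-space sketch --- so it should be proved rather than asserted. With both morphisms in hand, your uniqueness-of-morphisms argument does close the loop, and the rest of the proposal (invariance of $\pi_1$ under folding and pruning, termination, colimits) is sound.
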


The correspondence between the categories of $X\text{-CGrph}$ and
$\text{Sub}\left(F_{X}\right)$ follows from the theory of cover spaces.
Let us sketch a proof. We regard $R_{X}$ as a topological space and
look at the category of connected pointed cover spaces of $R_{X}$.
This category is equivalent to $\text{Sub}\left(F_{X}\right)$, following
from the fact that $R_{X}$ has a universal cover. Let $\Gamma$ be
a connected folded $X$-labeled core graph, viewed as a topological
space and $l$ as a continuous function. There is a unique way up
to cover isomorphism to extend $\Gamma$ to a cover of $R_{X}$. There
is also a unique way to associate a core graph to a cover space of
$R_{X}$. This gives us an equivalence between the category of connected
pointed cover spaces of $R_{X}$ and pointed connected folded $X$-labeled
core graphs.
\begin{defn}
By uniqueness of the core graph of a subgroup we can define a functor
$\core\colon\text{\ensuremath{X}-Grph}\to X\text{-CGrph}$ that associates
to a graph $\Gamma$ a core graph such that\ $\pi_{1}\left(\core\left(\Gamma\right)\right)=\pi_{1}\left(\Gamma\right)$.
\end{defn}

\begin{defn}
Let $\Gamma$ be a graph with a vertex $v$ of degree one which is
not the base-point. For $e=\iota^{-1}\left(v\right)$, let $\Gamma'$
be the graph with $V\left(\Gamma'\right)=V\left(\Gamma\right)\backslash\left\{ v\right\} $
and $E\left(\Gamma'\right)=E\left(\Gamma\right)\backslash\left\{ e,\overline{e}\right\} $.
We say that $\Gamma'$ is the result of \emph{trimming} $e$ from
$\Gamma$, and we notice that $\pi_{1}\left(\Gamma\right)=\pi_{1}\left(\Gamma'\right)$.
\end{defn}

\begin{rem}
For a finite graph $\Gamma$, after both trimming and folding $\left|E\left(\Gamma'\right)\right|<\left|E\left(\Gamma\right)\right|$.
If no foldings or trimmings are possible then $\Gamma$ is a core
graph. This means that after preforming a finite amount of trimmings
and foldings we arrive at $\core\left(\Gamma\right)$. It follows
from the uniqueness of $\core\left(\Gamma\right)$ that the order
in which one performs the trimmings and foldings does not matter.
\end{rem}

\begin{defn}[Algebraic extension]
Let $H\leq K\leq F_{X}$ be subgroups. The subgroup $K$ is said
to be an \emph{algebraic extension }of $H$ if there exists no proper
free factor $J$ of $K$ with $H\leq J<K$.
\end{defn}

Conjecture \ref{conj:old} is the converse to the following:
\begin{fact}[\cite{MR2395796}]
\label{fact:InversConj}If $H\leq K\leq F_{X}$ and $H\leq K$ is
an algebraic extension then $\Gamma_{Y}\left(H\right)\rightarrow\Gamma_{Y}\left(K\right)$
is onto for every basis $Y$ of $F_{X}$.
\end{fact}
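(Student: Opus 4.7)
The plan is to prove the contrapositive: if the morphism of core graphs $\varphi\colon\Gamma_Y(H)\to\Gamma_Y(K)$ fails to be surjective for some basis $Y$ of $F_X$, then there is a proper free factor $J$ of $K$ with $H\leq J\lneq K$, contradicting algebraicity of $H\leq K$.

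Given such a $\varphi$, the image $\im(\varphi)\subseteq\Gamma_Y(K)$ is a folded, pointed, connected subgraph, since $\Gamma_Y(K)$ is itself folded and $\varphi$ preserves all the structure. No folding is possible inside $\Gamma_Y(K)$ or any of its subgraphs, so $\Delta:=\core(\im(\varphi))$ is obtained from $\im(\varphi)$ by trimming only; in particular $\Delta$ remains a subgraph of $\Gamma_Y(K)$ and still contains the base point. Set $J:=\pi_1(\Delta)$. Functoriality of $\pi_1$ yields $H\leq J\leq K$, and since $\varphi$ misses some edge of the core graph $\Gamma_Y(K)$, so does $\Delta$, whence the inclusion $\Delta\subsetneq\Gamma_Y(K)$ is strict and $J\lneq K$.

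It remains to verify that $J$ is a free factor of $K$. I would choose a spanning tree $T_\Delta$ of $\Delta$ containing the base point and extend it to a spanning tree $T$ of $\Gamma_Y(K)$. Each non-tree edge-pair of $\Gamma_Y(K)$ yields an element of $K$ by reading the label of the unique reduced closed path that crosses that edge and otherwise stays in $T$, and these elements form a basis of $K$. Applying the same recipe to $\Delta$ with $T_\Delta$, the non-tree edge-pairs of $\Delta$ give a basis of $J$; because $T_\Delta\subseteq T$, these basis elements of $J$ appear literally inside the chosen basis of $K$. Since a subset of a free basis always generates a free factor, $J$ is a free factor of $K$, completing the proof. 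The delicate point is arranging the two spanning trees so that the basis of $J$ sits inside the basis of $K$ as a genuine subset and not merely up to Nielsen equivalence; the inclusion $T_\Delta\subseteq T$ is exactly what makes this work.
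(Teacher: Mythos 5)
The paper does not prove this statement; it is quoted as a known result from \cite{MR2395796} and used as given, so there is no internal proof to compare against. Your argument is correct and is the standard one: the image of $\Gamma_Y(H)$ under the canonical morphism (or its core $\Delta$) is a pointed, connected, folded subgraph of $\Gamma_Y(K)$, its fundamental group $J=\pi_1(\Delta)$ satisfies $H\leq J\leq K$, and the spanning-tree bookkeeping with $T_\Delta\subseteq T$ exhibits a basis of $J$ as a literal subset of a basis of $K$, so $J$ is a free factor of $K$. The one step you pass over quickly is deducing $J\lneq K$ from $\Delta\subsetneq\Gamma_Y(K)$: this is not automatic for arbitrary subgraphs, but here it follows either from the uniqueness of core graphs (if $J=K$ then the inclusion $\Delta\hookrightarrow\Gamma_Y(K)$ would, by the uniqueness of morphisms between folded pointed graphs, have to coincide with the unique isomorphism $\Gamma_Y(J)\cong\Gamma_Y(K)$, contradicting non-surjectivity), or by observing directly that since $\Gamma_Y(K)$ is a core graph, any dead-end subtree of $T$ would create a degree-one non-basepoint vertex, so a proper core subgraph through the base point must miss at least one non-$T$ edge and hence your basis of $J$ is a \emph{proper} subset of the basis of $K$. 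Your identification of the delicate point --- that $T_\Delta\subseteq T$ forces the two bases to nest set-theoretically rather than merely up to Nielsen equivalence, because adding any $\Delta$-edge outside $T_\Delta$ to $T$ would create a cycle and because $T$-geodesics between $T_\Delta$-vertices stay in $T_\Delta$ --- is exactly right.
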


Another motivation for this conjecture is the fact that in a specific
case it is true
\begin{fact}
\label{fact: CoverBasic}Let $Y$ be a finite set then $F_{Y}$ is
algebraic over $H$ iff $\Gamma_{X}\left(H\right)\to\Gamma_{X}\left(F_{Y}\right)$
is onto for every basis $X$ of $F_{Y}$ (To see why this is true
notice that $\Gamma_{X}\left(F_{Y}\right)$ is a bouquet of circles
for every $X)$.
\end{fact}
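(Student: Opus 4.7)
The forward implication is immediate from Fact~\ref{fact:InversConj} applied with $K = F_Y$, so the substance lies in the converse. My plan is to prove the contrapositive: assuming $F_Y$ is not algebraic over $H$, I will exhibit a specific basis $X$ of $F_Y$ for which the map $\Gamma_X(H)\to\Gamma_X(F_Y)$ fails to be onto.

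If algebraicity fails then by definition there is a proper free factor $J<F_Y$ with $H\leq J$. Since $F_Y$ is finitely generated, the standard theory of free factors yields a complement $L$ with $F_Y=J*L$, and $L$ has strictly positive rank because $J$ is proper. I would then pick a basis $X_J$ of $J$ and a basis $X_L$ of $L$, so that $X:=X_J\sqcup X_L$ is a basis of $F_Y$ with $X_L\neq\emptyset$.

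With this basis fixed, the hint reduces the problem to a direct labelling argument. Namely, $\Gamma_X(F_Y)$ is the bouquet of $|X|$ circles at a single vertex, one petal per element of $X$. On the other hand, because $\Gamma_X(H)$ is folded and $H\leq J=\langle X_J\rangle$, every reduced path in $\Gamma_X(H)$ carries as its label a reduced word in $F_X$ lying in $J$; such a word uses only letters from $X_J\cup X_J^{-1}$, so every edge of $\Gamma_X(H)$ is labelled in $X_J$. Consequently the image of $\Gamma_X(H)\to\Gamma_X(F_Y)$ is contained in the sub-bouquet of $X_J$-petals, and any $X_L$-petal --- which exists since $X_L\neq\emptyset$ --- witnesses non-surjectivity, contradicting the hypothesis.

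The only non-routine ingredient is the extension of a basis of a proper free factor to a basis of the ambient finitely generated free group via a free product decomposition, which is classical. Beyond that, the main conceptual step is translating the algebraic obstruction (a proper free factor $J$ containing $H$) into a basis-level obstruction (missing petals in the bouquet), and I do not anticipate serious technical difficulty.
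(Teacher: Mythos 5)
Your argument is correct and is exactly the route the paper intends with its parenthetical bouquet-of-circles hint: the forward direction is Fact~\ref{fact:InversConj} with $K=F_Y$, and for the converse one extends a basis $X_J$ of a proper free factor $J$ with $H\leq J<F_Y$ by a basis $X_L$ of a complement to a basis $X$ of $F_Y$, notes that $\Gamma_X(H)$ has only $X_J$-labelled edges, and concludes that its image misses the $X_L$-petals of the bouquet $\Gamma_X(F_Y)$. One small rewording: instead of asserting that every reduced path of $\Gamma_X(H)$ has label in $J$, argue with closed reduced paths through the base point (their labels lie in $H\leq J$ and are reduced words because the graph is folded, hence use only letters of $X_J\cup X_J^{-1}$), and these paths cover every edge by the defining property of a core graph, which gives the stated conclusion.
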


When we combine Facts \ref{fact:InversConj} and \ref{fact: CoverBasic}
we get a statement about intermediate subgroups which is another motivation
for the conjecture: Let $H\leq K\leq F_{Y}$ then $K$ is algebraic
over $H$ iff for every $L$ with $K\leq L\leq F_{Y}$ and every basis
$X$ of $L$ the morphism $\Gamma_{X}\left(H\right)\to\Gamma_{X}\left(K\right)$
is onto.

\section{\label{sec:The-category-FGR}The category FGR}
\begin{defn}[Whitehead graph]
A \emph{2-path} in a graph $\Gamma$ is a pair $\left(e,f\right)\in E\left(\Gamma\right)\times E\left(\Gamma\right)$
with $\iota\left(f\right)=\iota\left(\overline{e}\right)$ and $f\neq\overline{e}$.
If $\Gamma$ is $X$-labeled, the set 
\[
W\left(\Gamma\right)=\left\{ \left\{ l\left(e\right),l\left(\overline{f}\right)\right\} \mid\left(e,f\right)\text{ is a \ensuremath{2}-path in \ensuremath{\Gamma}}\right\} 
\]
forms the set of edges of a combinatorial (undirected) graph whose
vertices are $X\cup X^{-1}$, called the \emph{Whitehead graph }of
$\Gamma$. If $w\in F_{X}$ is a cyclically reduced word, the Whitehead
graph of $w$ as defined in \cite{MR1575455,MR1714852} and the Whitehead
graph of $\Gamma\left(\left\langle w\right\rangle \right)$ defined
here coincide. Let $W_{X}=W\left(R_{X}\right)$ be the set of edges
of the Whitehead graph of $R_{X}$, which we call the \emph{full Whitehead
graph}. Let $x,y\in X\cup X^{-1}$ and let $\left\{ x,y\right\} \in W_{X}$
be an edge. We denote $x.y=\left\{ x,y\right\} $ (this is similar
to the notation in \cite{MR1812024}).
\end{defn}

\begin{defn}
A homomorphism $\varphi\colon F_{Y}\to F_{X}$ is \emph{non-degenerate}
if $\varphi\left(y\right)\neq1$ for every $y\in Y$.
\end{defn}

\begin{defn}
Let $w\in F_{X}$ be a reduced word of length $n$. Define $\Gamma_{w}$
to be the $X$-labeled graph with $V\left(\Gamma_{w}\right)=\left\{ 1,\dots,n+1\right\} $
forming a path $P$ labeled by $l\left(P\right)=w$. Notice that $\Gamma_{w}\cong\Gamma_{w^{-1}}$.
\end{defn}

\begin{defn}
Let $\varphi\colon F_{Y}\to F_{X}$ be a non-degenerate homomorphism.
We define a functor $\mathcal{F}_{\varphi}$ from $Y$-labeled graphs
to $X$-labeled graphs by sending $y$-labeled edges to $\varphi\left(y\right)$-labeled
paths. Formally, let $\Delta$ be a $Y$-labeled graph and let $E_{0}=\left\{ e\in E\left(\Delta\right)|l\left(e\right)\in Y\right\} $
be an orientation of $\Delta$, namely, $E\left(\Delta\right)=E_{0}\sqcup\{\overline{e}\,|\,e\in E_{0}\}$.
For every $e\in E\left(\Delta\right)$ let $n_{e}\in\mathbb{N}$ be
the length of the word $\varphi\left(l\left(e\right)\right)\in F_{X}$
plus one. We consider $V\left(\Delta\right)$ as a graph without edges,
take the disjoint union of graphs $\bigsqcup_{e\in E_{0}}\Gamma_{\varphi\left(l\left(e\right)\right)}\sqcup V\left(\Delta\right)$
and for every $e\in E_{0}$ glue $1\in V\left(\Gamma_{\varphi\left(l\left(e\right)\right)}\right)$\textcolor{red}{{}
}to $\iota\left(e\right)\in V\left(\Delta\right)$, and $n_{e}\in V\left(\Gamma_{\varphi\left(l\left(e\right)\right)}\right)$
to $\iota\left(\overline{e}\right)\in V\left(\Delta\right)$. As for
functoriality, if $f\colon\Delta\to\Xi$ is a morphism of $Y$-labeled
graphs, $\mathcal{F}_{\varphi}f$ is defined as follows: every edge
in $\mathcal{F}_{\varphi}\left(\Delta\right)$ belongs to a path $\mathcal{F}_{\varphi}\left(e\right)$
for some $e\in E(\Delta)$, and we define $\left(\mathcal{F}_{\varphi}f\right)\left(\mathcal{F}_{\varphi}\left(e\right)\right)=\mathcal{F}_{\varphi}\left(f\left(e\right)\right)$.
\end{defn}

The motivation for $\mathcal{F}_{\varphi}$ is topological: thinking
of $\Delta,R_{Y},R_{X}$ as topological spaces and of $l\colon\Delta\to R_{Y},\varphi:R_{Y}\to R_{X}$
as continuous functions, we would like to think of $\Delta$ as an
$X$-labeled graph with the label function $\varphi\circ l$. The
problem is that $\varphi\circ l$ does not send edges to edges, and
we mend this by splitting edges in $\Delta$ to paths representing
their images in $R_{X}$.
\begin{rem}
\label{rem:Let--we}For $H\leq F_{Y}$ we notice that $\core(\mathcal{F}_{\varphi}\Gamma_{Y}(H))=\Gamma_{X}\left(\varphi\left(H\right)\right)$.
\end{rem}

\begin{defn}[Stencil]
Let $\Gamma$ be an $Y$-labeled graph, and $\varphi\colon F_{Y}\to F_{X}$
a non-degenerate homomorphism. We say that the pair $\left(\varphi,\Gamma\right)$
is a \textit{stencil} iff $\mathcal{F}_{\varphi}\left(\Gamma\right)$
is a folded graph. Notice that if $\Gamma$ is not folded, then $\mathcal{F}_{\varphi}\left(\Gamma\right)$
is not folded for any $\varphi$.
\end{defn}

\begin{defn}
Let $\tau\colon F_{X}\backslash\left\{ 1\right\} \to X\cup X^{-1}$
be the function returning the last letter of a reduced word. For reduced
words $u,v$ in a free group, we write $u\cdot v$ to indicate that
there is no cancellation in their concatenation, namely $\tau(u)\neq\tau(v^{-1})$.
\end{defn}

We leave the proof of Lemmas \ref{lem:Let--be} and \ref{lem:Let-and-}
as easy exercises for the reader.
\begin{lem}
\label{lem:Let--be}Let $\Gamma$ be an $Y$-labeled graph and $\varphi\colon F_{Y}\to F_{X}$
a non-degenerate homomorphism. Then $\left(\varphi,\Gamma\right)$
is a stencil iff $\Gamma$ is folded and for every $x.y\in W\left(\Gamma\right)$
we have $\varphi\left(x\right)\cdot\varphi\left(y\right)^{-1}$ (i.e.\ $\tau\left(\varphi\left(x\right)\right)\neq\tau\left(\varphi\left(y\right)\right)$).
\end{lem}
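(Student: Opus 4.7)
The plan is to localize the folding of $\mathcal{F}_\varphi(\Gamma)$ vertex by vertex, after splitting the vertices of the subdivided graph into two types: the \emph{old} vertices (images of $V(\Gamma)$) and the \emph{interior} vertices, introduced when an edge $e$ of $\Gamma$ is blown up to the path $\mathcal{F}_\varphi(e)$ labelled by the reduced word $\varphi(l(e))$.

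The first step is to dispose of the interior vertices. At each such vertex there are exactly two outgoing edges: the next edge along $\mathcal{F}_\varphi(e)$ and the reversal of the previous one. Their labels are consecutive letters of $\varphi(l(e))^{\pm 1}$, and since $\varphi(l(e))$ is reduced these labels differ, so folding can never fail at an interior vertex. The problem therefore reduces to checking folding at old vertices.

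At a fixed old vertex $v$, the outgoing edges of $\mathcal{F}_\varphi(\Gamma)$ are in bijection with the edges $g\in E(\Gamma)$ satisfying $\iota(g)=v$: to each such $g$ we associate the first edge of $\mathcal{F}_\varphi(g)$, labelled by the first letter of $\varphi(l(g))$, namely $\tau(\varphi(l(g))^{-1})^{-1}$. A folding failure at $v$ is thus the existence of distinct $g,g'$ with $\iota(g)=\iota(g')=v$ whose images under $\varphi$ begin with the same letter. Translating to the Whitehead graph, distinct edges $g\neq g'$ at $v$ correspond bijectively to $2$-paths $(\overline g,g')$ in $\Gamma$, and the associated Whitehead edge is $\{l(g)^{-1},l(g')^{-1}\}$. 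Setting $x=l(g)^{-1}$, $y=l(g')^{-1}$, the equality of first letters becomes $\tau(\varphi(x))=\tau(\varphi(y))$, which is exactly the negation of $\varphi(x)\cdot\varphi(y)^{-1}$.

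Combined with the observation (already noted in the definition of stencil) that an unfolded $\Gamma$ forces an unfolded $\mathcal{F}_\varphi(\Gamma)$, the two translations close the equivalence. The only real obstacle I anticipate is bookkeeping: keeping track of first versus last letters, the orientation reversal under $e\mapsto \overline e$, and which end of a subdivided path each label sits at. No step is mathematically deep, which is why the authors leave it to the reader.
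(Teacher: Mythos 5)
Your proof is correct, and since the paper explicitly leaves this lemma as an exercise, there is no official proof to compare against; yours is the natural one. The localization into interior versus old vertices is exactly the right decomposition: interior vertices are automatically folded because each word $\varphi(l(e))$ is reduced, and at an old vertex $v$ the labels of the outgoing edges in $\mathcal{F}_\varphi(\Gamma)$ are the first letters of the $\varphi(l(g))$ for $g$ emanating from $v$, which your bookkeeping correctly converts (via $g\mapsto\{l(g)^{-1},l(g')^{-1}\}$ and first-letter-of-$w$ $=\tau(w^{-1})^{-1}$) into the Whitehead condition $\tau(\varphi(x))\neq\tau(\varphi(y))$ for all $x.y\in W(\Gamma)$. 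One small remark you could make explicit: the hypothesis ``$\Gamma$ is folded'' in the statement is in fact subsumed by the Whitehead condition, since an unfolded $\Gamma$ has distinct $g,g'$ at a common vertex with $l(g)=l(g')$, yielding a loop $x.x\in W(\Gamma)$ for which $\varphi(x)\cdot\varphi(x)^{-1}$ can never hold; so the two clauses of the right-hand side are not independent, and your ``combined with the observation\ldots'' sentence is really just this remark in disguise.
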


\begin{prop}
\label{prop:compos}Let $\varphi\colon F_{Z}\to F_{Y}$ and $\psi\colon F_{Y}\to F_{X}$
be non-degenerate homomorphisms. The equality $\mathcal{F}_{\psi\circ\varphi}=\mathcal{F}_{\psi}\circ\mathcal{F}_{\varphi}$
holds iff $(\psi,\Gamma_{\varphi(x)})$ is a stencil for every $x\in Z$.
\end{prop}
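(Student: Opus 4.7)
The plan is to reduce the equality of functors to a purely local computation on the single-edge graph $\Gamma_z$ for each $z \in Z$. Write $\varphi(z) = y_1 \cdots y_k$ in reduced form. Then $\mathcal{F}_\varphi$ sends $\Gamma_z$ to the path $\Gamma_{\varphi(z)}$, and applying $\mathcal{F}_\psi$ to that path replaces each $y_i$-edge by the path $\Gamma_{\psi(y_i)}$, yielding an $X$-labeled graph of length $\sum_i |\psi(y_i)|$ whose boundary label is the (possibly unreduced) concatenation $\psi(y_1)\cdots\psi(y_k)$. On the other hand, $\mathcal{F}_{\psi\circ\varphi}(\Gamma_z) = \Gamma_{\psi(\varphi(z))}$ is the path whose length equals the reduced length of $\psi(\varphi(z))$. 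These two coincide iff no cancellation occurs at any junction, i.e., $\tau(\psi(y_i)) \neq \tau(\psi(y_{i+1}^{-1}))$ for all $i$. Since the Whitehead edges of $\Gamma_{\varphi(z)}$ are exactly the pairs $\{y_i, y_{i+1}^{-1}\}$, Lemma~\ref{lem:Let--be} rephrases this cancellation-free condition as the assertion that $(\psi, \Gamma_{\varphi(z)})$ is a stencil.

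From this local observation the forward implication is immediate: if the two functors agree, then in particular they agree on each $\Gamma_z$, so $(\psi, \Gamma_{\varphi(z)})$ must be a stencil for every $z \in Z$. For the converse, assuming the stencil condition holds on $(\psi, \Gamma_{\varphi(z)})$ for every $z$, we verify functorial equality on an arbitrary $Z$-labeled graph $\Delta$. Both $\mathcal{F}_{\psi\circ\varphi}(\Delta)$ and $\mathcal{F}_\psi(\mathcal{F}_\varphi(\Delta))$ are constructed by keeping the vertex set $V(\Delta)$ intact and, for each oriented edge $e \in E_0$ of $\Delta$, attaching a labeled path whose two endpoints are glued to $\iota(e)$ and $\iota(\overline{e})$. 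The per-edge calculation shows that the attached path agrees on every $e$, so the two constructions yield canonically isomorphic labeled graphs over $V(\Delta)$. Naturality in morphisms $f\colon \Delta \to \Xi$ follows because $\mathcal{F}_\varphi f$ and $\mathcal{F}_\psi f$ are defined edge by edge, hence agree on the corresponding subdivided paths.

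The main point requiring care is that the composition $\mathcal{F}_\psi \circ \mathcal{F}_\varphi$ genuinely acts edgewise on $\Delta$ without creating extraneous identifications: the subdivided paths in $\mathcal{F}_\varphi(\Delta)$ meet only at the original vertices $V(\Delta)$, and $\mathcal{F}_\psi$ refines each such edge independently while preserving this vertex set. Once this is granted, all content reduces to the one-edge cancellation check and its reformulation via Lemma~\ref{lem:Let--be}. Finally, one should note that the stencil hypothesis automatically guarantees that $\psi \circ \varphi$ is non-degenerate (so that $\mathcal{F}_{\psi \circ \varphi}$ is defined at all), since absence of cancellation forces $|\psi(\varphi(z))| = \sum_i |\psi(y_i)| \geq 1$ using the non-degeneracy of $\psi$.
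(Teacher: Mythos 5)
Your proof is correct and follows essentially the same approach as the paper's: reduce to the single-edge graph $\Gamma_z$ because $\mathcal{F}_\varphi$ is defined edgewise, observe that $\mathcal{F}_{\psi\circ\varphi}(\Gamma_z)$ is the reduced path for $\psi(\varphi(z))$ while $\mathcal{F}_\psi\circ\mathcal{F}_\varphi(\Gamma_z)$ is the concatenation of the $\psi(y_i)$-paths, and note that these agree exactly when no cancellation occurs at junctions, which is the stencil condition. The paper phrases the last step directly via the definition of stencil ($\mathcal{F}_\psi(\Gamma_{\varphi(z)})$ folded) rather than routing through Lemma~\ref{lem:Let--be}, but that is the same observation.
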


\begin{proof}
As $\mathcal{F}_{\varphi}$ is defined by replacing edges by paths
and gluing, it is enough to consider the graph $\Gamma_{x}$ with
a single edge labeled $x\in Z$. Both $\mathcal{F}_{\psi\circ\varphi}\left(\Gamma_{x}\right)$
and $\mathcal{F}_{\psi}\circ\mathcal{F}_{\varphi}\left(\Gamma_{x}\right)$
are paths whose labels equal $\psi\left(\varphi\left(x\right)\right)$,
but $\mathcal{F}_{\psi\circ\varphi}\left(\Gamma_{x}\right)$ is always
folded whereas $\mathcal{F}_{\psi}\circ\mathcal{F}_{\varphi}\left(\Gamma_{x}\right)$
may not be. In fact, $\mathcal{F}_{\varphi}\left(\Gamma_{x}\right)=\Gamma_{\varphi\left(x\right)}$
and so by definition $\mathcal{F}_{\psi}\circ\mathcal{F}_{\varphi}\left(\Gamma_{x}\right)$
is folded iff $(\psi,\Gamma_{\varphi\left(x\right)})$ is a stencil.
\end{proof}
\begin{lem}
\label{lem:Let-and-}Let $\varphi$ and $\psi$ be homomorphism as
in Proposition \ref{prop:compos} (such that\ $\mathcal{F}_{\psi\circ\varphi}=\mathcal{F}_{\psi}\circ\mathcal{F}_{\varphi}$)
and let $x\in Z$. Then $\tau\psi\left(\varphi\left(x\right)\right)=\tau\psi\left(\tau\varphi\left(x\right)\right)$.
\end{lem}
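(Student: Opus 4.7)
The plan is to unpack the hypothesis $\mathcal{F}_{\psi\circ\varphi}=\mathcal{F}_{\psi}\circ\mathcal{F}_{\varphi}$ through Proposition \ref{prop:compos}, which tells us that $(\psi,\Gamma_{\varphi(x)})$ is a stencil, and then to translate this stencil condition via Lemma \ref{lem:Let--be} into the statement that no cancellation occurs when $\psi$ is applied letter by letter to the reduced expression of $\varphi(x)$. Once this is established, the equality of last letters falls out immediately.

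Concretely, I would write $\varphi(x)=y_{1}\cdots y_{n}$ in reduced form, which is legitimate since $\varphi$ is non-degenerate (so $n\ge 1$) and yields $\tau\varphi(x)=y_{n}$. The graph $\Gamma_{\varphi(x)}$ is then a path whose $2$-paths are precisely the consecutive pairs $(e_{i},e_{i+1})$ for $1\le i<n$, contributing the Whitehead edges $y_{i}.y_{i+1}^{-1}$ to $W(\Gamma_{\varphi(x)})$. Applying Lemma \ref{lem:Let--be} to $\psi$ on each such edge gives $\psi(y_{i})\cdot\psi(y_{i+1}^{-1})^{-1}=\psi(y_{i})\cdot\psi(y_{i+1})$ for every $i<n$, i.e.\ $\tau(\psi(y_{i}))\neq\tau(\psi(y_{i+1})^{-1})$. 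This is exactly the statement that no cancellation occurs at the junction between $\psi(y_{i})$ and $\psi(y_{i+1})$. Consequently, the concatenation $\psi(\varphi(x))=\psi(y_{1})\cdots\psi(y_{n})$ is already reduced as written, and its last letter is $\tau\psi(y_{n})=\tau\psi(\tau\varphi(x))$, as desired.

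The only ``obstacle'' here is purely bookkeeping: one must be careful that the Whitehead edge contributed by the $2$-path $(e_{i},e_{i+1})$ involves $y_{i+1}^{-1}$ rather than $y_{i+1}$, so the condition extracted from Lemma \ref{lem:Let--be} carries an extra inversion that must be undone to obtain the no-cancellation assertion between $\psi(y_{i})$ and $\psi(y_{i+1})$. Aside from tracking this sign, the proof is a direct decoding of definitions, consistent with the authors' classification of the lemma as an easy exercise.
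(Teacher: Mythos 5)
Your proof is correct, and it is exactly the kind of direct unpacking of definitions that the authors intend by leaving Lemmas \ref{lem:Let--be} and \ref{lem:Let-and-} as exercises: the stencil condition from Proposition \ref{prop:compos} feeds through Lemma \ref{lem:Let--be} to give no cancellation at each junction $\psi(y_i)\psi(y_{i+1})$, so $\psi(y_1)\cdots\psi(y_n)$ is already reduced and the last-letter equality is immediate. Your bookkeeping of the inverse in the Whitehead edge $y_i.y_{i+1}^{-1}$ is handled correctly.
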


\begin{defn}[FGR]
The objects of the category \emph{Free Groups with Restrictions}
($\mathbf{FGR}$) are pairs $\left(Y,N\right)$ where $Y$ is a set
of ``generators'' and $N\subseteq W_{X}$ a set of ``restrictions''.
A morphism $\varphi\in\hom\left(\left(Y,N\right),\left(X,M\right)\right)$
is a group homomorphism $\varphi\colon F_{Y}\rightarrow F_{X}$ with
the following properties:
\begin{enumerate}
\item[(i)]  For every $x\in Y$, $\varphi\left(x\right)\neq1$ ($\varphi$ is
non-degenerate).
\item[(ii)] For every $x\in Y$, $W(\Gamma_{\varphi\left(x\right)})\subseteq M$.
\item[(iii)] For every $x.y\in N$, $\varphi(x)\cdot\varphi(y)^{-1}$ (i.e.\ $\tau\left(\varphi\left(x\right)\right)\neq\tau\left(\varphi\left(y\right)\right)$).
\item[(iv)] For every $x.y\in N$, $\tau\left(\varphi\left(x\right)\right).\tau\left(\varphi\left(y\right)\right)\in M$.\footnote{Technically, (iv) implies (iii), as $M\subseteq W_{Y}$ and $x.x\notin W_{Y}$.}
\end{enumerate}
\end{defn}

A main motivation for this definition is the second part of the next
Proposition.
\begin{prop}
\begin{enumerate}
\item $\mathbf{FGR}$ is a category, i.e.\ composition of morphisms is
a morphism.
\item Any two composable FGR morphisms $\varphi,\psi$ satisfy $\mathcal{F}_{\psi\circ\varphi}=\mathcal{F}_{\psi}\circ\mathcal{F}_{\varphi}$.
\end{enumerate}
\end{prop}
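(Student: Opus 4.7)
The plan is to derive both parts from a single pivotal observation. Let $\varphi\in\hom((Z,L),(Y,N))$ and $\psi\in\hom((Y,N),(X,M))$ be composable; I claim first that for every $z\in Z$ the pair $(\psi,\Gamma_{\varphi(z)})$ is a stencil. Indeed, $\Gamma_{\varphi(z)}$ is folded (it is a path labeled by a reduced word), and condition (ii) for $\varphi$ gives $W(\Gamma_{\varphi(z)})\subseteq N$; then for every $x.y\in W(\Gamma_{\varphi(z)})$, condition (iii) for $\psi$ provides $\psi(x)\cdot\psi(y)^{-1}$, so Lemma \ref{lem:Let--be} applies. Part (2) is then immediate from Proposition \ref{prop:compos}, and Lemma \ref{lem:Let-and-} becomes available on every generator $z\in Z$.

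For part (1), I would verify the four FGR conditions for $\psi\circ\varphi$ in turn. Write $\varphi(z)=y_1\cdots y_n$ in reduced form. The stencil property just established means no cancellation occurs at any junction in $\psi(y_1)\cdots\psi(y_n)$, so this concatenation is already the reduced form of $\psi\varphi(z)$ and in particular is nontrivial, giving condition (i). For condition (ii), every edge pair in $W(\Gamma_{\psi\varphi(z)})$ is either internal to some $\Gamma_{\psi(y_i)}$, hence in $M$ by condition (ii) for $\psi$, or is a junction pair $\{\tau(\psi(y_i)),\tau(\psi(y_{i+1}^{-1}))\}$; since $\{y_i,y_{i+1}^{-1}\}\in W(\Gamma_{\varphi(z)})\subseteq N$, condition (iv) for $\psi$ places the junction pair in $M$ as well.

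For conditions (iii) and (iv), fix $a.b\in L$. Condition (iv) for $\varphi$ gives $\tau\varphi(a).\tau\varphi(b)\in N$, and then conditions (iii) and (iv) for $\psi$ applied to this element of $N$ yield $\tau(\psi\tau\varphi(a))\neq\tau(\psi\tau\varphi(b))$ and $\tau(\psi\tau\varphi(a)).\tau(\psi\tau\varphi(b))\in M$ respectively. Using the stencil property established at the start, Lemma \ref{lem:Let-and-} replaces each $\tau\psi\tau\varphi$ by $\tau\psi\varphi$, producing exactly conditions (iii) and (iv) for $\psi\circ\varphi$. The main point requiring care throughout is the bookkeeping of inverses when passing between consecutive-letter pairs in a reduced word and the symmetric Whitehead notation $x.y=\{x,y\}$; once this is set up correctly, the whole proposition unfolds from the definitions, with the FGR conditions seen to be tailored precisely to guarantee the stencil property needed for Proposition \ref{prop:compos}.
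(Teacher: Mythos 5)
Your proposal is correct and follows essentially the same route as the paper's own proof: establish that $(\psi,\Gamma_{\varphi(z)})$ is a stencil via conditions (ii)-(iii), invoke Proposition~\ref{prop:compos} for part (2), and then verify the four FGR conditions for $\psi\circ\varphi$ by decomposing $W(\Gamma_{\psi\varphi(z)})$ into internal pairs and junction pairs, finishing with Lemma~\ref{lem:Let-and-}. The only cosmetic difference is that you verify (iii) separately, whereas the paper derives it from (iv) via the footnoted implication.
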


\begin{proof}
Let $(X_{1},N_{1})\overset{\varphi}{\longrightarrow}(X_{2},N_{2})\overset{\psi}{\longrightarrow}(X_{3},N_{3})$.
For any $x\in X_{1}$ we have $W(\Gamma_{\varphi\left(x\right)})\subseteq N_{2}$
since $\varphi$ satisfies (ii). Since $\psi$ satisfies (iii), $(\psi,\Gamma_{\varphi(x)})$
is a stencil, so that $\mathcal{F}_{\psi\circ\varphi}=\mathcal{F}_{\psi}\circ\mathcal{F}_{\varphi}$
by Proposition \ref{prop:compos}. This also implies $\len\psi(\varphi\left(x\right))\geq\len\varphi\left(x\right)\geq1$,
so that $\psi\!\circ\!\varphi$ satisfies (i). As $\Gamma_{\psi\circ\varphi(x)}=\mathcal{F}_{\psi}(\Gamma_{\varphi(x)})$,
we have
\[
W\left(\Gamma_{\psi\circ\varphi\left(x\right)}\right)=\left[\bigcup\nolimits _{\smash{z.y\in W(\Gamma_{\varphi(x)})}}W(\Gamma_{\psi(z)})\right]\cup\left\{ \tau\psi\left(z\right).\tau\psi\left(y\right)\mid z.y\in W(\Gamma_{\varphi\left(x\right)})\right\} 
\]
The part in $\left[\ \ \right]$ is contained in $N_{3}$ since $\psi$
satisfies (ii), and the rest is contained in $N_{3}$ since $\varphi$
satisfies (ii) and $\psi$ satisfies (iv), hence $\psi\!\circ\!\varphi$
satisfies (ii). Finally, whenever $x.y\in N_{1}$, it follows from
(iv) for $\varphi$ that $\tau\varphi\left(x\right).\tau\varphi\left(y\right)\in N_{2}$,
and from (iv) for $\psi$ that $\tau\psi(\tau\varphi(x)).\tau\psi(\tau\varphi(y))\in N_{3}$.
Using Lemma \ref{lem:Let-and-} we see that $\tau\psi(\varphi(x)).\tau\psi(\varphi(y))\in N_{3}$,
hence $\psi\!\circ\!\varphi$ satisfies (iv), and thus also (iii).
\end{proof}
\begin{lem}
An FGR morphism $\varphi\colon\left(Y,N\right)\to\left(X,M\right)$
is an FGR-isomorphism iff $\varphi|_{Y\cup Y^{-1}}\colon Y\cup Y^{-1}\to X\cup X^{-1}$
is a bijection that commutes with inversion and $M=\{\varphi(x).\varphi(y)\,|\,x.y\in N\}$.
\end{lem}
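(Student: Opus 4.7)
\noindent The plan is to handle the two directions of the equivalence separately, with the forward direction relying on a length/cancellation argument and the backward direction being essentially a verification.

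For the ``only if'' direction, assume $\varphi$ admits an FGR-inverse $\psi\colon(X,M)\to(Y,N)$. The first step is to show that $\varphi$ sends each $y\in Y$ to an element of $X\cup X^{-1}$. This follows from the length inequality observed in the proof of the previous proposition: composability of FGR morphisms forces $\len(\psi\circ\varphi)(y)\geq\len\varphi(y)\geq1$. Since $(\psi\circ\varphi)(y)=y$ has length $1$, we get $\len\varphi(y)=1$, so $\varphi(y)\in X\cup X^{-1}$; symmetrically for $\psi$. Then $\psi|_{X\cup X^{-1}}$ is inverse to $\varphi|_{Y\cup Y^{-1}}$, giving the required bijection, and commutation with inversion is automatic for group homomorphisms.

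With $\varphi(y)$ always a single letter, condition (iv) for $\varphi$ becomes $\varphi(x).\varphi(y)\in M$ whenever $x.y\in N$ (since $\tau\varphi(x)=\varphi(x)$), giving one inclusion of the desired description of $M$. For the reverse inclusion, take $a.b\in M$ and apply (iv) to $\psi$ to obtain $\psi(a).\psi(b)\in N$; since $\psi=\varphi^{-1}$ on letters, applying $\varphi$ recovers $a.b=\varphi(\psi(a)).\varphi(\psi(b))$, which exhibits $a.b$ as a member of $\{\varphi(x).\varphi(y)\mid x.y\in N\}$.

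For the ``if'' direction, define $\psi\colon F_X\to F_Y$ as the group homomorphism extending the inverse bijection on letters (well-defined because $\varphi|_{Y\cup Y^{-1}}$ commutes with inversion). I would then verify that $\psi$ lies in $\hom((X,M),(Y,N))$: (i) is immediate, (ii) is vacuous since $\Gamma_{\psi(x)}$ is a single edge and has empty Whitehead graph, and (iv) is exactly the hypothesis $M=\{\varphi(x).\varphi(y)\mid x.y\in N\}$ written in the opposite direction, so that any $a.b\in M$ pulls back to some $x.y\in N$ via $\psi$. Condition (iii) follows from (iv) by the footnote. Finally $\psi\circ\varphi$ and $\varphi\circ\psi$ agree with the respective identities on generators, hence as group homomorphisms, and the identity of $(Y,N)$ is easily seen to be an FGR morphism.

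The argument is almost entirely bookkeeping; the only nontrivial input is the length estimate $\len\psi(\varphi(y))\geq\len\varphi(y)$ used to force images of generators to be single letters, which is the genuine content that prevents any nontrivial ``substitution'' from being invertible in $\mathbf{FGR}$. After that, the description of $M$ is dictated by condition (iv) and the fact that all $\tau$'s trivialize on single-letter words.
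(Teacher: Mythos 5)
Your proof is correct and follows the same route as the paper: the key nontrivial step in both is the length estimate $\len(\psi\circ\varphi)(y)\geq\len\varphi(y)\geq 1$, forcing $\varphi$ to send generators to letters. The paper then declares "the rest is immediate"; you have simply spelled out those immediate verifications (the description of $M$ via condition (iv) with $\tau$ trivialized, and the check that the inverse bijection on letters gives an FGR morphism).
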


\begin{proof}
We show that if $\varphi\colon\left(Y,N\right)\to\left(X,M\right)$
is an FGR-isomorphism then $\varphi|_{Y\cup Y^{-1}}\colon Y\cup Y^{-1}\to X\cup X^{-1}$,
and the rest is immediate. Let $\varphi$ and $\psi$ be composable
FGR morphisms. As $\left(\psi,\Gamma_{\varphi\left(x\right)}\right)$
is a stencil, for every $x\in Y$ we have $\length\left(\psi\circ\varphi\left(x\right)\right)\ge\length\left(\varphi\left(x\right)\right)$.
Let $x\in Y$ and let $\varphi$ be an isomorphism and $\psi$ its
inverse. So $\length\left(\varphi\left(x\right)\right)\leq\length\left(\psi\circ\varphi\left(x\right)\right)=\length\left(x\right)=1$,
and on the other hand $\varphi\left(x\right)\neq1$ implies $\length\left(\varphi\left(x\right)\right)=1$,
namely $\varphi\left(x\right)\in X\cup X^{-1}$.
\end{proof}

\subsection{\label{subsec:Partition-of-Hom}Partition of Hom in FGR}

Let $X$ be a countably infinite set, $Y$ a finite set and $(Y,N_{Y})$
an FGR object. We present a recursive partition of $\hom\left((Y,N_{Y}),(X,W_{X})\right)$.
The ultimate goal of this partition is decomposing free group morphisms
$F_{Y}\to F_{X}$, and we notice that $\hom\left(\left(Y,\varnothing\right),(X,W_{X})\right)$
are all the non-degenerate homomorphisms in $\hom\left(F_{Y},F_{X}\right)$.
In order to perform this recursive process we need to consider morphisms
with a general restriction set $N_{Y}$ in the domain, but it is not
necessary to consider a general $N_{X}$ in place of $W_{X}$ (and
in fact, the method presented does not work for a general restriction
set $N_{X}$).

Let $\varphi\in\hom((Y,N_{Y}),(X,W_{X}))$. If $x.y\in N_{Y}$ then
$\varphi(x)\cdot\varphi(y^{-1})$ by definition. Let $x.y\in W_{Y}\backslash N_{Y}$.
There are five possible types of cancellation in the product $\varphi(x)\varphi(y^{-1})$.
Denote $u=\varphi\left(x\right)$, $v=\varphi\left(y\right)$, and
let $t$ be the maximal subword canceled in $uv^{-1}$, so that $u=u_{0}\cdot t$,
$v=v_{0}\cdot t$, and $uv^{-1}=u_{0}\cdot v_{0}^{-1}$. The five
types of cancellation are:
\begin{enumerate}
\item No cancellation, namely $t=1$.
\item $u$ and $v^{-1}$ do not absorb one another, namely $t,u_{0},v_{0}\neq1$
and $uv^{-1}=u_{0}\cdot v_{0}^{-1}$.
\item $u$ absorbs $v^{-1}$, namely $t\neq1$, $v_{0}=1,u_{0}\neq1$ and
$uv^{-1}=u_{0}$.
\item $v^{-1}$ absorbs $u$, namely $t\neq1$, $v_{0}\neq1,u_{0}=1$ and
$uv^{-1}=v_{0}^{-1}$.
\item $u$ and $v^{-1}$ cancel each other out, namely $u=v$.
\end{enumerate}
If $y=x^{-1}$ then only the first two types can occur, and if $x^{-1}.y^{-1}\in N_{Y}$
only the first four. For $\varphi\in\hom\left((Y,N_{Y}),(X,W_{X})\right)$
and $x.y\in W_{Y}\backslash N_{Y}$ such that $\varphi(x)\varphi(y^{-1})$
is of cancellation type $i$, we define an FGR object $(U_{i},N_{U_{i}})$
and a so-called \emph{folding morphism} $\psi_{x.y}^{i}\in\hom((Y,N_{Y}),(U_{i},N_{U_{i}}))$,
such that\ $\varphi$ factors through $\psi_{x.y}^{i}$. Namely,
there is an FGR morphism $\varphi'\in\hom\left((U_{i},N_{U_{i}}),(X,W_{X})\right)$
with $\varphi=\varphi'\circ\psi_{x.y}^{i}$. We describe $U_{i},N_{U_{i}},\psi_{x.y}^{i},\varphi'$
for each type:
\begin{enumerate}
\item $U_{1}=Y$, $\psi_{x.y}^{1}=\id$, $N_{U_{1}}=N_{Y}\cup\left\{ x.y\right\} $,
and $\varphi'=\varphi$.
\item There are two cases here, depending on whether $x=y^{-1}$ or not:

\uline{\mbox{$x\neq y^{-1}$}}: $U_{2}=Y\sqcup\left\{ s\right\} $
and $N_{U_{2}}=\left\{ \tau\psi_{x.y}^{2}(r).\tau\psi_{x.y}^{2}(z)|r.z\in N_{Y}\right\} \cup\left\{ x.s^{-1},y.s^{-1},x.y\right\} $
where 
\begin{align*}
\psi_{x.y}^{2}\left(z\right) & =\begin{cases}
xs & z=x\\
ys & z=y\\
z & z\in Y\backslash\{x^{\pm1},y^{\pm1}\}
\end{cases} & \varphi'\left(z\right) & =\begin{cases}
u_{0} & z=x\\
v_{0} & z=y\\
t & z=s\\
\varphi\left(z\right) & z\in U_{2}\backslash\{x^{\pm1},y^{\pm1},s^{\pm1}\}.
\end{cases}
\end{align*}
\uline{\mbox{$x=y^{-1}$}}: $U_{2}=Y\sqcup\left\{ s\right\} $
and $N_{U_{2}}\negthickspace=\negthickspace\{\tau\psi_{x.y}^{2}(r).\tau\psi_{x.y}^{2}(z)|r.z\in N_{Y}\}\cup\{x.s^{-1},y.s^{-1},x.y\}$
where
\begin{align*}
\psi_{x.y}^{2}\left(z\right) & =\begin{cases}
s^{-1}xs & z=x\\
z & z\in Y\backslash\{x^{\pm1}\}
\end{cases} & \varphi'\left(z\right) & =\begin{cases}
u_{0} & z=x\\
t & z=s\\
\varphi\left(z\right) & z\in U_{2}\backslash\{x^{\pm1},s^{\pm1}\}.
\end{cases}
\end{align*}

\item $U_{3}=Y$ and $N_{U_{3}}=\left\{ \tau\psi_{x.y}^{3}\left(r\right).\tau\psi_{x.y}^{3}\left(z\right)|r.z\in N_{Y}\right\} \cup\left\{ x.y^{-1}\right\} $
where
\begin{align*}
\psi_{x.y}^{3}\left(z\right) & =\begin{cases}
xy & z=x\\
z & z\in Y\backslash\{x^{\pm1}\}
\end{cases} & \varphi'\left(z\right) & =\begin{cases}
u_{0} & z=x\\
\varphi\left(z\right) & z\in U_{3}\backslash\{x^{\pm1}\}.
\end{cases}
\end{align*}
\item $U_{4}=Y$ and $N_{U_{4}}=\left\{ \tau\psi_{x.y}^{4}\left(r\right).\tau\psi_{x.y}^{4}\left(z\right)|r.z\in N_{Y}\right\} \cup\left\{ x^{-1}.y\right\} $
where
\begin{align*}
\psi_{x.y}^{4}\left(z\right) & =\begin{cases}
yx & z=y\\
z & z\in Y\backslash\{y^{\pm1}\}
\end{cases} & \varphi'\left(z\right) & =\begin{cases}
v_{0} & z=y\\
\varphi\left(z\right) & z\in U_{4}\backslash\{y^{\pm1}\}.
\end{cases}
\end{align*}
\item $U_{5}=Y-\left\{ y\right\} $ and $N_{U_{5}}=\left\{ \tau\psi_{x.y}^{5}\left(r\right).\tau\psi_{x.y}^{5}\left(z\right)|r.z\in N_{Y}\right\} $
where
\begin{align*}
\psi_{x.y}^{5}\left(z\right) & =\begin{cases}
x & z=y\\
z & z\in Y\backslash\{y^{\pm1}\}
\end{cases} & \varphi'\left(z\right) & =\varphi\left(z\right)\;\forall z\in U_{5}.
\end{align*}
\end{enumerate}
With $x.y$ fixed, let $\psi_{x.y}^{i*}\colon\hom\left(\left(U_{i},N_{U_{i}}\right),\left(X,W_{X}\right)\right)\to\hom\left(\left(Y,N_{Y}\right),\left(X,W_{X}\right)\right)$
be the induced function $\varphi'\mapsto\varphi'\circ\psi_{x.y}^{i}$.
We obtain that $\hom\left(\left(Y,N_{Y}\right),\left(X,W_{X}\right)\right)$
can be presented as a disjoint union
\[
\hom\left(\left(Y,N_{Y}\right),\left(X,W_{X}\right)\right)=\bigsqcup\nolimits _{i=1}^{5}\psi_{x.y}^{i*}\left(\hom\left(\left(U_{i},N_{U_{i}}\right),\left(X,W_{X}\right)\right)\right).
\]
For every $(U_{i},N_{U_{i}})$, if $N_{U_{i}}\neq W_{U_{i}}$ we continue
recursively, choosing an edge $x.y\in W_{U_{i}}\backslash N_{U_{i}}$
and partitioning $\hom\left((U_{i},N_{U_{i}}),\left(X,W_{X}\right)\right)$
accordingly. We now show that no matter which edges are chosen, this
process terminates:
\begin{thm}
\label{thm:decomposition}Any $\varphi\in\hom((Y,N_{Y}),(X,W_{X}))$
decomposes as $\varphi=\varphi'\circ\psi_{k}\circ\dots\circ\psi_{1}$
such that
\begin{enumerate}
\item The morphisms $\psi_{j}\colon U_{j-1}\rightarrow U_{j}$ $(1\leq j\leq k$,
$U_{0}=Y)$ are folding morphisms.
\item $N_{U_{k}}=W_{U_{k}},$ and in particular $\varphi'(x)\cdot\varphi'(y)^{-1}$
for $x\neq y\in U_{k}\cup U_{k}^{-1}$.
\end{enumerate}
\end{thm}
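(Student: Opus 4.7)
The plan is to exhibit a well-founded measure on the intermediate FGR objects (together with their morphisms to $(X,W_{X})$) that strictly decreases under every folding step, so that a simple induction forces the recursive process to halt regardless of which edge is chosen at each stage. Concretely, for $\varphi'\in\hom((U,N_{U}),(X,W_{X}))$ I would set
\[
S(\varphi')=\sum\nolimits _{z\in U}\len(\varphi'(z)),\qquad T(\varphi')=|W_{U}\setminus N_{U}|,
\]
and order pairs $(S,T)$ lexicographically. Since $U_{0}=Y$ is finite and types 1, 3, 4, 5 never enlarge the generating set while type 2 adds exactly one generator, $U$ stays finite throughout, so both coordinates lie in $\mathbb{N}$ and lex order on $\mathbb{N}^{2}$ is well-founded.

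The next step is to verify the strict decrease type by type. For type 1 the generators and images are unchanged, so $S$ is preserved, but $N$ gains the element $x.y\in W_{Y}\setminus N_{Y}$, and $T$ strictly drops by $1$. For types 2, 3, 4 the word lengths of the newly introduced letters sum to less than the lengths of the replaced words by exactly $\len(t)\geq 1$: e.g.\ in type 2 with $x\neq y^{-1}$, we trade $\len(u)+\len(v)=\len(u_{0})+\len(v_{0})+2\len(t)$ for $\len(u_{0})+\len(v_{0})+\len(t)$, a net loss of $\len(t)$. For type 5, $U_{5}=U\setminus\{y\}$ and $\varphi'=\varphi|_{U_{5}}$, so $S$ drops by $\len(\varphi(y))\geq 1$ using non-degeneracy. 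Hence in every case $(S,T)$ strictly decreases lexicographically.

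The well-foundedness of lex order then forces termination: the sequence of pairs $(S(\varphi'_{j}),T(\varphi'_{j}))$ attached to the successive residual morphisms $\varphi'_{j}\in\hom((U_{j},N_{U_{j}}),(X,W_{X}))$ cannot decrease forever, so the recursion stops after finitely many steps $k$. The only way the procedure can stop is if there is no edge in $W_{U_{k}}\setminus N_{U_{k}}$ to fold along, i.e.\ $N_{U_{k}}=W_{U_{k}}$, giving part (2). The parenthetical consequence on non-cancellation is then immediate from property (iii) of the definition of an FGR morphism applied to $\varphi'$, since every pair $x.y$ with $x\neq y\in U_{k}\cup U_{k}^{-1}$ lies in $W_{U_{k}}=N_{U_{k}}$.

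The main obstacle is tracking the length accounting in the unusual $y=x^{-1}$ subcase of type 2, where the decomposition of $u$ induced by $\psi_{x.y}^{2}(x)=s^{-1}xs$ is of the form $u=t^{-1}\cdot u_{0}\cdot t$ rather than the asymmetric $u=u_{0}\cdot t$ of the generic case; one has to check that $\len(u_{0})+\len(t)=\len(u)-\len(t)$, so the drop in $S$ is still $\len(t)\geq 1$. Once this subcase and the analogous self-referential bookkeeping in the definitions of $N_{U_{i}}$ are handled, the rest of the argument is entirely routine.
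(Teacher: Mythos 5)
Your proof is correct and coincides with the paper's argument: both use the height function $h(\varphi)=\bigl(\sum_{z\in U}\len(\varphi(z)),\ |W_{U}\setminus N_{U}|\bigr)$ with the lexicographic order on $\mathbb{N}\times\mathbb{N}$ and verify strict decrease under each folding type (the paper leaves this verification, which you carry out type by type, as "straightforward").
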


\begin{proof}
Let \emph{$\mathcal{O}$ }be the collection of all FGR morphisms from
some $(Y,N_{Y})$ with $Y$ finite into $(X,W_{X})$. Define a height
function $h\colon\mathcal{O}\to\mathbb{N}\times\mathbb{N}$ by $h\left(\varphi\right)=\big(\sum_{y\in Y}\length\left(\varphi\left(y\right)\right),\left|W_{Y}\backslash N_{Y}\right|\big)$,
and consider $\mathbb{N}\times\mathbb{N}$ with the lexicographic
order. It is straightforward to verify that $h\left(\varphi'\right)<h\left(\varphi\right)$
for every folding morphism $\psi_{x.y}^{i}$ and $\varphi=\varphi'\circ\psi_{x.y}^{i}$.
Thus, the decomposition process ends in a finite amount of steps.
\end{proof}
\begin{prop}[The triangle rule]
\label{prop:triangle}Let $(Y,N_{Y})$ be an FGR object, and $x.y\in N_{Y}$
and $z\in Y\cup Y^{-1}$ be such that $z.y,z.x\in W_{Y}\backslash N_{Y}$.
Then 
\[
\hom\left(\left(Y,N_{Y}\right),\left(X,W_{X}\right)\right)=\psi_{z.y}^{1*}\left(\hom\left(\left(Y,N_{Y_{1}}\right),\left(X,W_{X}\right)\right)\right)\cup\psi_{z.x}^{1*}\left(\hom\left(\left(Y,N_{Y_{2}}\right),\left(X,W_{X}\right)\right)\right)
\]
(not a disjoint union), where $N_{Y_{1}}=N_{Y}\cup\left\{ z.y\right\} $
and $N_{Y_{2}}=N_{Y}\cup\left\{ z.x\right\} $.
\end{prop}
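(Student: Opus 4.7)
The key observation is that the type-1 folding morphisms $\psi_{z.y}^{1}$ and $\psi_{z.x}^{1}$ are identity maps on the underlying sets of generators (only the restriction set in the target changes). So the induced maps $\psi_{z.y}^{1*}$ and $\psi_{z.x}^{1*}$ are, up to identification, inclusions of subsets of $\hom((Y,N_Y),(X,W_X))$: concretely, $\psi_{z.y}^{1*}(\hom((Y,N_{Y_1}),(X,W_X)))$ is the set of $\varphi\in\hom((Y,N_Y),(X,W_X))$ which \emph{in addition} satisfy the FGR conditions (iii) and (iv) for the extra edge $z.y$, and analogously for the other summand. Thus the statement reduces to showing that every $\varphi\in\hom((Y,N_Y),(X,W_X))$ satisfies the additional conditions for either $z.y$ or $z.x$.

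The inclusion $\supseteq$ is then immediate, since enlarging the restriction set on the source only tightens the defining conditions of an FGR morphism. For the nontrivial direction $\subseteq$, I would exploit the hypothesis $x.y\in N_Y$: applying condition (iii) of $\varphi$ to this edge yields $\tau\varphi(x)\neq\tau\varphi(y)$. Since $\tau\varphi(z)$ is a single element of $X\cup X^{-1}$, it cannot coincide with both $\tau\varphi(x)$ and $\tau\varphi(y)$ simultaneously. Hence at least one of
\[
\tau\varphi(z)\neq\tau\varphi(y)\qquad\text{or}\qquad\tau\varphi(z)\neq\tau\varphi(x)
\]
holds, which is precisely condition (iii) for the edge $z.y$ or $z.x$ respectively.

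Finally, I would observe that because the target restriction set is the \emph{full} Whitehead graph $W_X$, condition (iv) is automatic: $W_X$ contains every edge $\{a,b\}$ with $a\neq b$ in $X\cup X^{-1}$, so $\tau\varphi(z).\tau\varphi(y)\in W_X$ is equivalent to (iii) for $z.y$, and similarly for $z.x$. Therefore in either branch of the case split above, $\varphi$ genuinely lies in the image of the corresponding $\psi^{1*}$, completing the proof.

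There is no real obstacle here; the whole argument is a pigeonhole observation on $\tau\varphi(z)$, made possible by (a) the trivial nature of type-1 folding on underlying generators and (b) the fact that the fullness of $W_X$ collapses (iv) into (iii). This is also the reason the proposition is stated only for targets of the form $(X,W_X)$ — for a general restriction set in the target, condition (iv) would produce an obstruction and the covering would fail.
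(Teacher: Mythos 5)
Your proof is correct and follows essentially the same argument as the paper: the core step is the pigeonhole observation that $\tau\varphi(x)=\tau\varphi(z)=\tau\varphi(y)$ would contradict $x.y\in N_Y$, so $\varphi$ satisfies the extra restriction for at least one of $z.y$ or $z.x$. Your additional remarks — that $\psi^1$ is the identity on generators and that the fullness of $W_X$ collapses condition (iv) into (iii) — are accurate elaborations of details the paper leaves implicit.
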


\begin{proof}
Let $\varphi\colon(Y,N_{Y})\to(X,W_{X})$. As $\tau\left(\varphi\left(x\right)\right)\!=\!\tau\left(\varphi\left(z\right)\right)\!=\!\tau\left(\varphi\left(y\right)\right)$
contradicts $x.y\!\in\!N_{Y}$,~either $\tau\left(\varphi\left(x\right)\right)\neq\tau\left(\varphi\left(z\right)\right)$
or $\tau\left(\varphi\left(y\right)\right)\neq\tau\left(\varphi\left(z\right)\right)$,
hence $\varphi$ decomposes as $\varphi\!=\!\varphi'\circ\psi_{z.y}^{1}$
or $\varphi\!=\!\varphi'\circ\psi_{z.x}^{1}$.
\end{proof}

\subsection{The Core functor}

The flowing easy claim leads to some very helpful observations.
\begin{claim}
\label{claim:pushout}Let $\Gamma\to\Delta$ be a surjective graph
morphism. Let $\Gamma'$ be a graph resulting from folding two edges
in $\Gamma$, and $\Delta'$ the pushout of $\Gamma'\leftarrow\Gamma\rightarrow\Delta$
(which is obtained by folding the images of these edges in $\Delta$,
if they are different from one another \cite{MR695906}). Then $\Gamma'\to\Delta'$
is surjective.
\end{claim}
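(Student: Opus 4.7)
The plan is to establish surjectivity by a straightforward diagram chase on the commutative square
\[
\begin{array}{ccc}
\Gamma & \xrightarrow{\ f\ } & \Delta\\
{\scriptstyle p}\downarrow & & \downarrow{\scriptstyle q}\\
\Gamma' & \xrightarrow{\ g\ } & \Delta',
\end{array}
\]
where $p$ is the folding map that defines $\Gamma'$, $q$ is the (possibly trivial) folding map that defines $\Delta'$, and $g$ is the morphism induced by the universal property of the pushout. The first step is to set up the vertical maps: $p$ is surjective on both edges and vertices, being a quotient by the identifications prescribed by the folding move. For $q$, observe that if $e,e'\in E(\Gamma)$ are the two folded edges, then since $f$ is a label-preserving graph morphism, $f(e)$ and $f(e')$ share the same initial vertex and the same label. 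Hence they are either equal, in which case $\Delta'=\Delta$ and $q=\id$, or they can be folded in $\Delta$ to produce $\Delta'$; in the latter case $q$ is again a quotient map and thus surjective on edges and vertices.

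Next I would chase elements. Given any $d'\in V(\Delta')$ (respectively $E(\Delta')$), pick a preimage $d\in V(\Delta)$ (resp.\ $E(\Delta)$) via the surjectivity of $q$. Using the assumed surjectivity of $f$, lift $d$ further to some $c\in V(\Gamma)$ (resp.\ $E(\Gamma)$) with $f(c)=d$. Set $c'=p(c)\in\Gamma'$. Then by commutativity of the square,
\[
g(c')=g(p(c))=q(f(c))=q(d)=d',
\]
so $g$ is surjective. Applying this argument separately to vertices and to edges finishes the proof.

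The only point needing care is that the induced map $g\colon\Gamma'\to\Delta'$ exists as a graph morphism. This is ensured by the pushout construction: the two edges identified by $p$ have images under $f$ that are identified by $q$ (either because $q$ is the identity and $f(e)=f(e')$, or because $q$ is precisely the fold on $f(e),f(e')$), and analogously for the two vertices being identified. Thus $q\circ f$ factors through $p$, yielding the required $g$. There is no substantial obstacle: the argument is essentially the general fact that in a pushout square, surjectivity of one pair of parallel arrows forces surjectivity of the opposite pair, specialized here to the category of ($X$-labeled) graphs where surjectivity can be checked separately on edges and vertices.
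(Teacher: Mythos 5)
The paper states Claim \ref{claim:pushout} with no proof at all, presenting it as routine; your diagram chase is a correct way to fill that in. Both $p$ and $q$ are surjective (each is either the identity or a single fold, i.e.\ a quotient of a labeled graph), and the chase $g(p(c))=q(f(c))=q(d)=d'$ on vertices and edges separately is exactly the expected argument, using only surjectivity of $q$ and $f$ and commutativity of the pushout square. One small wording issue: $g$ is not ``induced by the universal property'' of the pushout --- it is one of the two structure maps that constitute the pushout cocone --- but your final paragraph already gives the honest derivation of $g$ by observing that the identifications made by $p$ are sent by $f$ into identifications made by $q$, so $q\circ f$ descends through $p$; that is the correct justification and the argument is sound.
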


Let $\Gamma$ be a finite $Y$-labeled graph. One obtains $\core\Gamma$
by a finite sequence of folding and trimming, and one can perform
foldings first and only then trimmings, as when a folded graph is
trimmed it remains folded. Following the claim, if $\Gamma\to\Delta$
is a surjective graph morphism and $\core\Gamma$ is obtained from
$\Gamma$ without trimming then $\core\Gamma\to\core\Delta$ is also
surjective.
\begin{cor}
\label{rem:If--is}If $\Gamma$ is a core graph and $\left(\varphi,\Gamma\right)$
a stencil, then $\mathcal{F}_{\varphi}\left(\Gamma\right)$ is a core
graph, namely, $\core\mathcal{F}_{\varphi}\left(\Gamma\right)=\mathcal{F}_{\varphi}\left(\Gamma\right)$.
If $\Gamma,\,\Delta$ are core graphs, $\left(\varphi,\Gamma\right)$
is a stencil and $\Gamma\to\Delta$ is onto, then $\core\mathcal{F}_{\varphi}\left(\Gamma\right)\to\core\mathcal{F}_{\varphi}\left(\Delta\right)$
is onto.
\end{cor}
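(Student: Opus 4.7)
The plan is to prove the two assertions in sequence: the first by verifying each defining property of a core graph for $\mathcal{F}_\varphi(\Gamma)$, and the second as a short consequence of the first together with functoriality of $\mathcal{F}_\varphi$.

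For the first assertion, I would check that $\mathcal{F}_\varphi(\Gamma)$ is $X$-labeled, connected, pointed, folded, and has the property that every edge lies on some closed reduced path through the base point. The first three are immediate from the construction of $\mathcal{F}_\varphi$, and foldedness is the very definition of $(\varphi,\Gamma)$ being a stencil. The substantive step is to lift the reduced-closed-path property from $\Gamma$: given $e' \in E(\mathcal{F}_\varphi(\Gamma))$, it lies on $\mathcal{F}_\varphi(e)$ for some $e \in E(\Gamma)$, and since $\Gamma$ is a core graph there exists a closed reduced path $P = e_1,\dots,e_n$ through the base point containing $e$. The image $\mathcal{F}_\varphi(P)$ is a closed path through the base point of $\mathcal{F}_\varphi(\Gamma)$ that contains $e'$, with label $\varphi(l(e_1))\cdots \varphi(l(e_n))$. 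Each 2-path $(e_k, e_{k+1})$ of $P$ contributes an edge $\{l(e_k),l(\overline{e_{k+1}})\} \in W(\Gamma)$, so by Lemma~\ref{lem:Let--be} the stencil condition yields $\tau\varphi(l(e_k)) \neq \tau\varphi(l(\overline{e_{k+1}}))$, i.e.\ no cancellation between consecutive factors. Thus the label is a reduced word, and since $\mathcal{F}_\varphi(\Gamma)$ is already known to be folded, this forces $\mathcal{F}_\varphi(P)$ itself to be a reduced path.

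For the second assertion, note that surjectivity of $\Gamma \to \Delta$ implies $W(\Delta)\subseteq W(\Gamma)$, so Lemma~\ref{lem:Let--be} gives that $(\varphi,\Delta)$ is also a stencil. Applying the first assertion to both $\Gamma$ and $\Delta$, we conclude $\core\mathcal{F}_\varphi(\Gamma) = \mathcal{F}_\varphi(\Gamma)$ and $\core\mathcal{F}_\varphi(\Delta)=\mathcal{F}_\varphi(\Delta)$, so no folding or trimming is required. It remains to observe that the induced morphism $\mathcal{F}_\varphi(\Gamma) \to \mathcal{F}_\varphi(\Delta)$ is surjective: this follows directly from the construction of $\mathcal{F}_\varphi$ on morphisms, since each edge is replaced by a path of the same length in the same way on both sides, the old vertex map is just that of $\Gamma\to\Delta$, and the new internal vertices on the $\Delta$-side are images of their counterparts on the $\Gamma$-side. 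Alternatively, one could invoke Claim~\ref{claim:pushout} together with the paragraph that follows it, but since neither trimming nor folding is needed there is nothing left for the claim to handle.

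The main point requiring care is the reducedness of $\mathcal{F}_\varphi(P)$ in the first assertion, and that is precisely what the stencil condition of Lemma~\ref{lem:Let--be} is designed to guarantee; everything else amounts to unpacking the definitions of $\mathcal{F}_\varphi$, core graphs, and the Whitehead graph.
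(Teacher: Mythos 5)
Your proof of the first assertion is correct in substance, and its main step — that the label of $\mathcal{F}_\varphi(P)$ is reduced because the 2-paths along $P$ contribute Whitehead edges in $W(\Gamma)$ and the stencil condition (via Lemma~\ref{lem:Let--be}) forbids cancellation between the consecutive $\varphi$-images — is exactly the right justification. This is consistent with the paper's intent, though the paper leaves this part implicit (it can also be seen from the degree characterization of finite core graphs, since $\mathcal{F}_\varphi$ only subdivides edges, preserving every original degree and creating only degree-two vertices).

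The second assertion, however, contains a genuine error. You assert that surjectivity of $\Gamma\to\Delta$ implies $W(\Delta)\subseteq W(\Gamma)$, and deduce that $(\varphi,\Delta)$ is also a stencil. This implication is false: surjectivity on vertices and edges does not give surjectivity on 2-paths, because a preimage of $e'$ and a preimage of $f'$ need not meet at a common vertex in $\Gamma$. Concretely, take $\Gamma$ to be the 4-cycle labeled $abab$ (so $\pi_1(\Gamma)=\langle abab\rangle$) and $\Delta=R_{\{a,b\}}$; the morphism $\Gamma\to\Delta$ is onto, but $W(\Gamma)=\{a.b^{-1},\,a^{-1}.b\}$ while $W(\Delta)$ is the complete graph on $\{a,a^{-1},b,b^{-1}\}$. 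Taking $\varphi(a)=c$, $\varphi(b)=cd$, one checks $(\varphi,\Gamma)$ is a stencil, but $(\varphi,\Delta)$ is not, since $\tau\varphi(a^{-1})=c^{-1}=\tau\varphi(b^{-1})$ while $a^{-1}.b^{-1}\in W(\Delta)$; consequently $\mathcal{F}_\varphi(\Delta)$ is \emph{not} folded and $\core\mathcal{F}_\varphi(\Delta)\neq\mathcal{F}_\varphi(\Delta)$.

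This is precisely where Claim~\ref{claim:pushout} (which you dismiss as unneeded) is doing the real work: $\mathcal{F}_\varphi(\Gamma)$ is a core graph by the first assertion, but $\mathcal{F}_\varphi(\Delta)$ in general requires folding, and the claim together with the paragraph after it guarantees that folding on the $\Delta$-side preserves surjectivity of the induced map, precisely because $\core\mathcal{F}_\varphi(\Gamma)$ is obtained from $\mathcal{F}_\varphi(\Gamma)$ without trimming. Your proof needs that step; the intended argument is not that no folding happens, but that the source needs neither folding nor trimming, which is enough for Claim~\ref{claim:pushout} to carry surjectivity through to the cores.
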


We recall a Lemma from \cite{MR3211804}:
\begin{lem}[\cite{MR3211804}]
\label{lem:Ori} Let $\Gamma$ be a finite $X$-labeled graph such
that for every vertex $v$, except for possibly the base-point, there
are $e,e'\in\iota^{-1}(v)$ with $l(e)\neq l(e')$. Then $\core\Gamma$
is obtained from $\Gamma$ by foldings alone (i.e.\ without trimming).
\end{lem}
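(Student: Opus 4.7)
The plan is to show that the hypothesis---every non-basepoint vertex has two incident edges with distinct labels---is preserved under folding, and then to observe that this hypothesis forces $\deg(v)\geq 2$ at every non-basepoint vertex, which rules out trimming entirely.

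First I would make the elementary observation that the hypothesis on $\Gamma$ already implies $\deg(v)\geq 2$ for every non-basepoint vertex $v$, so $\Gamma$ admits no trimming at the outset. Assuming the hypothesis is preserved by folding, one can iterate foldings (each strictly reduces $|E(\Gamma)|$, and $\Gamma$ is finite) until arriving at a folded graph $\Gamma^{*}$ that still satisfies the hypothesis. Then $\Gamma^{*}$ is folded with every non-basepoint vertex of degree at least $2$, hence is already a core graph; by uniqueness of $\core\Gamma$, this $\Gamma^{*}$ must be $\core\Gamma$, and it was produced from $\Gamma$ by folding alone.

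The main step is therefore stability of the hypothesis under a single fold. Suppose we fold $e,f\in\iota^{-1}(u)$ with $l(e)=l(f)$, and let $v_{1}=\iota(\overline{e})$, $v_{2}=\iota(\overline{f})$; let $u'$ and $v'$ be the images of $u$ and of $\{v_{1},v_{2}\}$ in the folded graph $\Gamma'$. I would verify the hypothesis vertex by vertex in $\Gamma'$. Vertices untouched by the fold inherit it trivially. At $u'$, the star loses one edge (since $e,f$ are merged into a single edge $e'$ of label $l(e)$), but any witness pair for the hypothesis at $u$ either survives intact in $\Gamma'$ or can be expressed with $e'$ replacing $e$ or $f$, preserving the distinct-label property. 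At $v'$, assuming $v'$ is not the basepoint, neither $v_{1}$ nor $v_{2}$ is the basepoint in $\Gamma$, so each individually carries a pair of distinctly labeled edges; merging the equally-labeled pair $\overline{e},\overline{f}$ into a single edge $\overline{e}'$ of the same label cannot destroy this, since if a witness was $\overline{e}$ or $\overline{f}$ it is replaced by $\overline{e}'$ with no change of label.

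The only subtlety I anticipate is bookkeeping at $v'$: distinguishing $v_{1}=v_{2}$ from $v_{1}\neq v_{2}$, and noting that when one of $v_{1},v_{2}$ is the basepoint the merged vertex $v'$ is the basepoint and so the hypothesis is vacuous there. Once these cases are tabulated, the verification reduces to the routine observation that the merger loses only one edge whose label is duplicated, so no pair of distinct labels can be destroyed at a non-basepoint vertex. This is the main obstacle of the proof and amounts to a careful case analysis rather than any deeper argument.
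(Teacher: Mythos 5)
The paper cites this lemma from \cite{MR3211804} without reproducing a proof, so there is no in-paper argument to compare against; I will instead assess the proposal on its own terms. Your strategy --- show that the property ``every non-basepoint vertex carries two incident edges with distinct labels'' is stable under a single fold, iterate until folded, and then observe that the resulting graph has no degree-one non-basepoint vertices (so no trimming is possible) and is therefore already $\core\Gamma$ by the finite-graph characterization of core graphs and the uniqueness of $\core$ --- is the standard and essentially forced route, and it is correct.

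One small piece of bookkeeping you flag but do not fully carry out: besides $v_{1}=v_{2}$ and the basepoint cases, the fold can identify $u$ with one of $v_{1},v_{2}$ (when $e$ or $f$ is, or becomes, a loop), in which case your ``at $u'$'' and ``at $v'$'' clauses speak about the same vertex. This does no harm --- the surviving witness pair you extract from $u$'s star already certifies the merged vertex --- but the case distinction in your write-up implicitly treats $u'$ and $v'$ as distinct and should be phrased to allow their coincidence. The key invariant that makes every case go through is exactly the one you identify: the two edges that get identified at any given vertex under a fold always carry \emph{equal} labels ($l(e)=l(f)$, $l(\overline{e})=l(\overline{f})$), so a witness pair of \emph{distinct} labels can lose at most one of its members to the identification, and the merged edge retains the lost member's label. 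Stating that observation once, and then running through ``merged vertex = image of $u$ alone'', ``= image of $\{v_{1},v_{2}\}$ alone'', and ``= image of $\{u,v_{1},v_{2}\}$'' would close the gap cleanly.
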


\begin{cor}
\label{cor:Let--be}Let $H\leq K\leq F_{Y}$ be subgroups such that\ there
is a non-trivial cyclically reduced word in $H$. If the morphism
$\Gamma_{Y}\left(H\right)\to\Gamma_{Y}\left(K\right)$ is onto then
$\Gamma_{Y}\left(uHu^{-1}\right)\to\Gamma_{Y}\left(uKu^{-1}\right)$
is onto for every $u\in F_{Y}$.
\end{cor}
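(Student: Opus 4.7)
The plan is to realize the conjugated core graphs as cores of easily-described auxiliary graphs. Writing $u = u_1 \cdots u_m$ with $u_i \in Y \cup Y^{-1}$, I would construct $\Gamma'_H$ by taking $\Gamma_Y(H)$ and gluing a fresh reduced $u$-labeled path $w = p_0, p_1, \ldots, p_m$ so that $p_m$ is identified with the base-point of $\Gamma_Y(H)$, declaring $w$ to be the new base-point; form $\Gamma'_K$ the same way. Then $\pi_1(\Gamma'_H) = uHu^{-1}$ and $\pi_1(\Gamma'_K) = uKu^{-1}$, so $\core \Gamma'_H = \Gamma_Y(uHu^{-1})$ and $\core \Gamma'_K = \Gamma_Y(uKu^{-1})$. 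The given surjection $\Gamma_Y(H) \to \Gamma_Y(K)$ extends in the obvious way, acting as the identity on the glued paths, to a surjective morphism $\Gamma'_H \to \Gamma'_K$.

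Given this set-up, the remark following Claim \ref{claim:pushout} will yield the surjectivity of $\core \Gamma'_H \to \core \Gamma'_K$ as soon as I know that $\core \Gamma'_H$ is obtained from $\Gamma'_H$ by foldings alone, without any trimming. I would prove this by invoking Lemma \ref{lem:Ori}, which reduces the task to showing that every non-base vertex of $\Gamma'_H$ carries two edges with distinct labels. The intermediate vertices $p_1, \ldots, p_{m-1}$ clearly do, because $u$ being reduced means $u_{i+1} \neq u_i^{-1}$; and every non-base vertex of $\Gamma_Y(H)$ already satisfies the condition inside $\Gamma_Y(H)$, since a core graph is folded with all non-base vertices of degree at least two.

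The only delicate case---and the place where the cyclically reduced hypothesis on $H$ is actually spent---is the old base-point $v_H = p_m$, which becomes an interior vertex of $\Gamma'_H$. Here I would take a non-trivial cyclically reduced word $h = h_1 \cdots h_n \in H$ and read it as a closed path at $v_H$: this exhibits an outgoing edge labeled $h_1$ at $v_H$ and an incoming edge whose reversed orientation at $v_H$ is labeled $h_n^{-1}$, and the two labels differ since $h_1 \neq h_n^{-1}$ by cyclic reducedness (when $n=1$, the loop itself furnishes the two distinct labels $h_1$ and $h_1^{-1}$). With this last case handled, Lemma \ref{lem:Ori} applies and the proof concludes. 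No step should present a real obstacle; the main point is simply to pinpoint where the cyclically reduced hypothesis is used, namely in preventing the attached path from creating a trimmable tail.
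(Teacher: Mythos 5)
Your proof is correct and follows essentially the same route as the paper: glue a $u$-labeled path to the base point, observe the resulting morphism is still onto, verify the hypotheses of Lemma~\ref{lem:Ori} (with the cyclically reduced word supplying the condition at the old base point), and conclude via Claim~\ref{claim:pushout}. The only cosmetic difference is that the paper explicitly verifies Lemma~\ref{lem:Ori}'s hypothesis for both $\Gamma_Y^u(H)$ and $\Gamma_Y^u(K)$, while you verify it only for $\Gamma'_H$ and rely on the remark after Claim~\ref{claim:pushout}; both are fine.
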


\begin{proof}
Let $u\in F_{Y}$ be a reduced word. We construct the graph $\Gamma_{Y}^{u}\left(H\right)$
by taking the graph $\Gamma_{Y}\left(H\right)\sqcup\Gamma_{u}$, gluing
$\length(u)+1\in V\left(\Gamma_{u}\right)$ to the base point of $\Gamma_{Y}\left(H\right)$
and setting the new base point to be $1\in V\left(\Gamma_{u}\right)$.
We notice that $\pi_{1}\left(\Gamma_{Y}^{u}\left(H\right)\right)=uHu^{-1}$
and that $\Gamma_{Y}^{u}\left(H\right)\to\Gamma_{Y}^{u}\left(K\right)$
is onto. Since $H$ and $K$ contain a cyclically reduce word the
degrees of the base points of $\Gamma_{Y}\left(H\right)$ and $\Gamma_{Y}\left(K\right)$
are at least two. From this we conclude that both $\Gamma_{Y}^{u}\left(H\right)$
and $\Gamma_{Y}^{u}\left(K\right)$ satisfy the conditions of Lemma
\ref{lem:Ori}. Finally an inductive application of Claim \ref{claim:pushout}
gives the result.
\end{proof}

\section{\label{sec:General-setting}Basis-independent surjectivity}

Returning to Conjectures \ref{conj:old}-\ref{conj:Xge3}, recall
that for $H\leq K\leq F_{Y}$, we seek to show that for every free
extension $F_{X}$ of $F_{Y}$ and every $\varphi\in\aut\left(F_{X}\right)$
the graph morphism $\Gamma_{X}\left(\varphi\left(H\right)\right)\to\Gamma_{X}\left(\varphi\left(K\right)\right)$
is surjective. Without loss of generality we may take $X$, from now
on, to be a fixed countably infinite set, and assume $Y\subseteq X$.
In Theorem \ref{thm:main} we even consider the set of non-degenerate
homomorphisms $\hom\left(\left(Y,\varnothing\right),(X,W_{X})\right)$:
this includes all injective homomorphisms $F_{Y}\hookrightarrow F_{X}$,
which in turn include all the restrictions of automorphisms of $F_{X}$.
Using Remark \ref{rem:Let--we} we translate the problem to showing
that all the graph morphisms $\{\core\mathcal{F}_{\varphi}\left(\Gamma_{Y}\left(H\right)\to\Gamma_{Y}\left(K\right)\right)\,|\,\varphi\in\hom\left(\left(Y,\varnothing\right),(X,W_{X})\right)\}$
are surjective. This is a special case of the following problem.
\begin{problem}
\label{prob:Given-a-morphism}Given a morphism $\Gamma\rightarrow\Delta$
between two\textcolor{red}{{} }$U$-labeled core graphs, and an FGR
object $\left(U,N_{U}\right)$, we call the pair $\left(\Gamma\to\Delta,\left(U,N_{U}\right)\right)$
a \textit{surjectivity problem}\textit{\emph{. We say that the problem
}}\textit{resolves positively }\textit{\emph{if }}all the morphisms
in the set $\mathscr{P}=\{\core\mathcal{F}_{\varphi}\left(\Gamma\to\Delta\right)\,|\,\varphi\in\hom\left(\left(U,N_{U}\right),(X,W_{X})\right)\}$
are surjective. Note that taking $(U,N_{U})=(Y,\varnothing)$ we recover
the problem of surjectivity w.r.t.\ every non-degenerate $\varphi\colon F_{U}\rightarrow F_{X}$,
and taking $(U,N_{U})=(Y,W(\Gamma))$ we obtain a trivial problem.
\end{problem}

Let us give a simple criterion for showing that one instance of Problem
\ref{prob:Given-a-morphism} is contained in another, or that the
problems are equivalent. For $U'$-labeled core graphs $\Gamma',\Delta'$
and $\psi\colon\left(U,N_{U}\right)\to\left(U',N_{U'}\right)$ an
FGR morphism such that\ $\mathcal{F}_{\psi}\left(\Gamma\rightarrow\Delta\right)=\Gamma'\rightarrow\Delta'$,
\begin{align*}
 & \negthickspace\negthickspace\negthickspace\negthickspace\negthickspace\negthickspace\negthickspace\negthickspace\negthickspace\negthickspace\negthickspace\negthickspace\negthickspace\negthickspace\left\{ \core\mathcal{F}_{\varphi}\left(\Gamma'\to\Delta'\right)\,\middle|\,\varphi\in\hom\left(\left(U',N_{U'}\right),\left(X,W_{X}\right)\right)\right\} \\
 & \ \ \ \ \ \ \ \ =\left\{ \core\mathcal{F}_{\varphi}\circ\mathcal{F}_{\psi}\left(\Gamma\to\Delta\right)\,\middle|\,\varphi\in\hom\left(\left(U',N_{U'}\right),\left(X,N_{X}\right)\right)\right\} \\
 & \ \ \ \ \ \ \ \ =\left\{ \core\mathcal{F}_{\varphi\circ\psi}\left(\Gamma\to\Delta\right)\,\middle|\,\varphi\in\hom\left(\left(U',N_{U'}\right),\left(X,N_{X}\right)\right)\right\} \\
 & \ \ \ \ \ \ \ \ \subseteq\left\{ \core\mathcal{F}_{\varphi}\left(\Gamma\to\Delta\right)\,\middle|\,\varphi\in\hom\left(\left(U,N_{U}\right),\left(X,N_{X}\right)\right)\right\} ,
\end{align*}
so that the problem $\left(\Gamma'\to\Delta',\left(U',N_{U'}\right)\right)$
is contained in $\left(\Gamma\to\Delta,\left(U,N_{U}\right)\right)$.
If $\psi$ is an FGR-isomorphism, then the two problems are equivalent.
\begin{rem}
\label{rem:Following-corollary-}Following corollary \ref{cor:Let--be}
it is enough to consider surjectivity problems where $\pi_{1}\left(\Gamma\right)$
contains a cyclically reduced word: otherwise, one can examine the
graph of a conjugate of $\pi_{1}\left(\Gamma\right)$ that contains
a cyclically reduced word.
\end{rem}

\begin{defn}
Let $\Gamma$ be a $Y$-labeled folded graph. An FGR object $(Y,N_{Y})$
is said to be a \textit{stencil space}\emph{ of }$\Gamma$ if $W\left(\Gamma\right)\subseteq N_{Y}$.
The reason for the name is that for any object $\left(X,N_{X}\right)$
and morphism $\varphi\in\hom\left((Y,N_{Y}),\left(X,N_{X}\right)\right)$,
the pair $\left(\varphi,\Gamma\right)$ is a stencil.
\end{defn}

We show a method for determining whether all the morphisms in a problem
$\left(\Gamma\to\Delta,\left(U,N_{U}\right)\right)$ are surjective.
We distinguish three cases:
\begin{enumerate}
\item $\Gamma\to\Delta$ is not surjective: clearly $\mathscr{P}$ resolves
negatively.
\item $\Gamma\to\Delta$ is surjective and $W\left(\Gamma\right)\subseteq N_{U}$,
i.e.\ $\left(U,N_{U}\right)$ is a stencil space of $\Gamma$: following
Corollary \ref{rem:If--is}, $\mathscr{P}$ resolves positively.
\item $\Gamma\to\Delta$ is surjective and $W\left(\Gamma\right)\backslash N_{U}\neq\varnothing$:
in this case we cannot resolve $\mathscr{P}$ immediately. We call
this an ambiguous case.
\end{enumerate}
When $\left(\Gamma\to\Delta,\left(U,N_{U}\right)\right)$ is an ambiguous
case, we cover it by five new problems, according to the partition
of $\hom$ presented in §\ref{subsec:Partition-of-Hom}: For any $x.y\in W\left(\Gamma\right)\backslash N_{U}$
we have
\begin{align*}
 & \left\{ \core\mathcal{F}_{\varphi}\left(\Gamma\to\Delta\right)\,\middle|\,\varphi\in\hom\left(\left(U,N_{U}\right),\left(X,N_{X}\right)\right)\right\} \\
 & \ \ \ \ \ \ \ \ =\bigcup{\!\vphantom{\big|}}_{i=1}^{5}\left\{ \core\smash{\mathcal{F}_{\varphi\circ\psi_{x.y}^{i}}}\left(\Gamma\to\Delta\right)\,\middle|\,\varphi\in\hom\left(\left(U_{i},N_{U_{i}}\right),\left(X,W_{X}\right)\right)\right\} \\
 & \ \ \ \ \ \ \ \ =\bigcup{\!\vphantom{\big|}}_{i=1}^{5}\left\{ \core\smash{\mathcal{F}_{\varphi}\circ\mathcal{F}_{\psi_{z.y}^{i}}}\left(\Gamma\to\Delta\right)\,\middle|\,\varphi\in\hom\left(\left(U_{i},N_{U_{i}}\right),\left(X,W_{X}\right)\right)\right\} \\
 & \ \ \ \ \ \ \ \ =\bigcup{\!\vphantom{\big|}}_{i=1}^{5}\left\{ \core\smash{\mathcal{F}_{\varphi}}\left(\core\smash{\mathcal{F}_{\psi_{z.y}^{i}}}\left(\Gamma\to\Delta\right)\right)\,\middle|\,\varphi\in\hom\left(\left(U_{i},N_{U_{i}}\right),\left(X,W_{X}\right)\right)\right\} ,
\end{align*}
where the last equality follows from Remark \ref{rem:Let--we}. Thus
we obtain five different subproblems $\big(\core\mathcal{F}_{\psi_{z.y}^{i}}\left(\Gamma\to\Delta\right),(U_{i},N_{U_{i}})\big)$,
and $\mathscr{P}$ resolves positively iff all the five subproblems
do so. If one of these five problems is of case 1 then $\left(\Gamma\to\Delta,\left(U,N_{U}\right)\right)$
resolves negatively, if all are of case 2 then $\left(\Gamma\to\Delta,\left(U,N_{U}\right)\right)$
resolves positively, and otherwise we continue recursively and split
each ambiguous case into five sub-subproblems. We now give a toy example
to show how this process may end.
\begin{example}
\label{exa:comutator}Let $\mathscr{P}$ be the surjectivity problem
with $U=\left\{ x,y\right\} $, $N_{U}=\{x.y^{-1},x^{-1}.y,\allowbreak y^{-1}.x^{-1},x.x^{-1}\}$,
$\Gamma=\Gamma_{U}\left(\left\langle xyx^{-1}y^{-1}\right\rangle \right)$
(the commutator) and $\Delta=\Gamma_{U}\left(\left\langle x,y\right\rangle \right)$.
The extension $\left\langle xyx^{-1}y^{-1}\right\rangle <\left\langle x,y\right\rangle $
is algebraic so we know that this surjectivity problem resolves positively.
We will show how one can conclude this using the method presented
above. We notice that $\Gamma\twoheadrightarrow\Delta$ and $x.y\in W\left(\Gamma\right)\backslash N_{U}$,
so $\mathscr{P}$ is an ambiguous case. We split into five subproblems
(some of the calculations are explicit and some omitted and left to
the reader):
\begin{enumerate}
\item $\mathscr{P}_{1}:$ $\psi_{x.y}^{1}=\id$, $U_{1}=U,\,N_{U_{1}}=\left\{ x.y^{-1},x^{-1}.y,y^{-1}.x^{-1},x.x^{-1},x.y\right\} $,
$\Gamma_{1}=\Gamma$,~$\Delta_{1}=\Delta$. The problem $\mathscr{P}_{1}$
resolves positively as $\left(U_{1},N_{U_{1}}\right)$ is a stencil
space of $\Gamma_{1}$ and $\Gamma_{1}\twoheadrightarrow\Delta_{1}$.
\item $\mathscr{P}_{2}$: $\psi_{x.y}^{2}\negmedspace:\negmedspace\begin{smallmatrix}x\mapsto xt\\
y\mapsto yt
\end{smallmatrix}$, $U_{2}=\left\{ x,y,t\right\} ,$ $N_{U_{2}}=\left\{ t.y^{-1},x^{-1}.t,y^{-1}.x^{-1},x.t^{-1},y.t^{-1},x.y\right\} $
,$\Gamma_{2}=\core\mathcal{F}_{\psi_{x.y}^{2}}\left(\Gamma\right)=\Gamma_{U_{2}}\left(\psi_{x.y}^{2}\left\langle xyx^{-1}y^{-1}\right\rangle \right)=\Gamma_{U_{2}}\left(\left\langle xtyx^{-1}t^{-1}y^{-1}\right\rangle \right),\Delta_{2}=\Gamma_{U_{2}}\left(\left\langle xt,yt\right\rangle \right)$.
Again $\mathscr{P}_{2}$ resolves positively as $\left(U_{2},N_{U_{2}}\right)$
is a stencil space of $\Gamma_{2}$ and $\Gamma_{2}\twoheadrightarrow\Delta_{2}.$
\item $\mathscr{P}_{3}$: $\psi_{x.y}^{3}\!:\!\begin{smallmatrix}x\mapsto x\phantom{y}\\
y\mapsto yx
\end{smallmatrix}$, $U_{3}=U$, $N_{U_{3}}=\left\{ x.y^{-1},x^{-1}.x,y^{-1}.x^{-1},y.x^{-1}\right\} $,
$\Gamma_{3}=\Gamma\twoheadrightarrow\Delta=\Delta_{3}$, and $x.y\in W\left(\Gamma_{3}\right)\backslash N_{U_{3}}$,
so this is an ambiguous case.
\item $\mathscr{P}_{4}$: $\psi_{x.y}^{4}\!:\!\begin{smallmatrix}x\mapsto xy\\
y\mapsto y\phantom{x}
\end{smallmatrix},$ $U_{4}=U$, $N_{U_{4}}=\left\{ y.y^{-1},x^{-1}.y,y^{-1}.x^{-1},x.y^{-1}\right\} ,$
$\Gamma_{4}=\Gamma\twoheadrightarrow\Delta=\,\Delta_{4}$, and $x.y\in W\left(\Gamma_{4}\right)\backslash N_{U_{4}}$
so this is an ambiguous case.
\item This case does not occur because $x^{-1}.y^{-1}\in N_{U}$.
\end{enumerate}
We notice first that the FGR-isomorphism $x\leftrightarrow y$ gives
an equivalence between $\mathscr{P}_{4}$ and $\mathscr{P}_{3}$,
and second, that $\mathscr{P}_{3}$ is identical to the original $\mathscr{P}$!
Now, it may seem that we are trapped in an infinite self-referential
loop: $\mathscr{P}$ resolves positively if $\mathscr{P}$ resolves
positively. Thankfully, Theorem \ref{thm:decomposition} rescues us
from this nightmare. Let $\varphi\in\hom\left(\left(U,N_{U}\right),\left(X,W_{X}\right)\right)$.
We decompose $\varphi$ by $x.y$ recursively, mirroring the splitting
of the four subproblems. Because the decomposition of the morphism
via FGR is finite, it can loop through problems $3$ and 4 only a
finite number of times until it arrives at a stencil space equivalent
to $\mathscr{P}_{1}$ or $\mathscr{P}_{2}$. Since $\mathscr{P}_{1}$
and $\mathscr{P}_{2}$ resolve positively, we conclude that so does
$\mathscr{P}$.
\end{example}

Not all surjectivity problems end with equivalent ambiguous problems.
In some cases this process produces more and more different ambiguous
problems and therefore the algorithm does not end with a conclusive
answer. In the case of the counterexample (§\ref{sec:The-counterexample}),
after a fair amount of splitting we end up with ambiguous cases equivalent
to or contained in problems we have previously encountered. In Section
§\ref{sec:TBD-finiteness} we discuss a property of the graph $\Gamma$
in a problem $\mathscr{P}$ that is a necessary condition for this
process to end.

\subsection{Change of coordinates}

With $X$ and $(Y,N_{Y})$ as before, in $\mathscr{P}=\{\core\mathcal{F}_{\varphi}\left(\Gamma\to\Delta\right)\,|\,\varphi\in\hom\left(\left(Y,N_{Y}\right),(X,W_{X})\right)\}$
there can be many repetitions, as different homomorphisms $\varphi$
may give rise to the same graph morphism $\core\mathcal{F}_{\varphi}\left(\Gamma\to\Delta\right)$.
We can exploit this in a way which shortens calculations significantly
in §\ref{sec:The-counterexample}.
\begin{defn}
A \emph{change of coordinates} consists of\\
\begin{minipage}[t]{0.7\columnwidth}%
\begin{enumerate}
\item FGR objects $(V,N_{V}),(U_{1},N_{1}),\dots,(U_{n},N_{n})$,
\item a group homomorphism $\sigma\colon F_{V}\rightarrow F_{Y}$ (not necessarily
an FGR morphism),
\item an FGR morphism $\psi_{i}\colon(V,N_{V})\to(U_{i},N_{i})$ for every
$i$,
\item an FGR morphism $\sigma_{i}\colon(Y,N_{Y})\to(U_{i},N_{i})$ for every
$i$,\medskip{}
\end{enumerate}
\end{minipage}\hspace*{\fill}%
\begin{minipage}[t]{0.2\columnwidth}%
\vspace{0.3em}
$\xymatrix{ & Y\ar[rd]^{\varphi}\ar[dd]^{\sigma_{i}}\\
V\ar[dr]_{\psi_{i}}\ar[ur]^{\sigma} &  & X\\
 & U_{i}\ar[ur]_{\varphi'}
}
$\vspace{-4em}
\end{minipage}\hspace*{\fill}\\
such that
\begin{enumerate}
\item $\pi_{1}\left(\Delta\right)\leq\im\sigma$,
\item $\sigma^{*}:\varphi\mapsto\varphi\circ\sigma$ takes $\hom\left(\left(Y,N_{Y}\right),\left(X,W_{X}\right)\right)$
to $\hom\left(\left(V,N_{V}\right),\left(X,W_{X}\right)\right)$,
\item $\sigma_{i}\circ\sigma=\psi_{i}$ for every $i$,
\item for every $\varphi\in\hom\left((Y,N_{Y}),(X,W_{X})\right)$ there
is an index $i$ and a morphism $\varphi'\in\hom\left(\left(U_{i},N_{i}\right),(X,W_{X})\right)$
such that $\varphi\circ\sigma=\varphi'\circ\psi_{i}$.
\end{enumerate}
\end{defn}

\begin{prop}
\label{prop:COC} The problem $\left(\Gamma\to\Delta,\left(Y,N_{Y}\right)\right)$
resolves positively if and only if $\left(\core\mathcal{F}_{\sigma_{i}}\left(\Gamma\to\Delta\right),\left(U_{i},N_{i}\right)\right)$
resolves positively for every $i$.
\end{prop}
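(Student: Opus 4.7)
The plan is to reduce both directions of the equivalence to a single identity of core graph morphisms:
\[
\core\mathcal{F}_{\varphi}(\Gamma\to\Delta)\;=\;\core\mathcal{F}_{\varphi'}\bigl(\core\mathcal{F}_{\sigma_{i}}(\Gamma\to\Delta)\bigr),
\]
valid whenever $\varphi\in\hom((Y,N_{Y}),(X,W_{X}))$ and $\varphi'\in\hom((U_{i},N_{i}),(X,W_{X}))$ are related by $\varphi\circ\sigma=\varphi'\circ\psi_{i}$ as group homomorphisms $F_{V}\to F_{X}$.

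To establish this identity, I would apply Remark \ref{rem:Let--we} to both sides. The left-hand side becomes the morphism $\Gamma_{X}(\varphi\pi_{1}\Gamma)\to\Gamma_{X}(\varphi\pi_{1}\Delta)$, and applying the remark twice rewrites the right-hand side as $\Gamma_{X}(\varphi'\sigma_{i}\pi_{1}\Gamma)\to\Gamma_{X}(\varphi'\sigma_{i}\pi_{1}\Delta)$. Condition (1) gives $\pi_{1}\Delta\leq\im\sigma$, and functoriality of $\pi_{1}$ applied to $\Gamma\to\Delta$ gives $\pi_{1}\Gamma\leq\pi_{1}\Delta$, so both subgroups lie in $\im\sigma$. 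For any $g=\sigma(h)$ in either subgroup,
\[
\varphi(g)=\varphi(\sigma(h))=\varphi'(\psi_{i}(h))=\varphi'(\sigma_{i}(\sigma(h)))=\varphi'(\sigma_{i}(g)),
\]
where the third step uses condition (3). Hence $\varphi(\pi_{1}\Gamma)=\varphi'\sigma_{i}(\pi_{1}\Gamma)$ and similarly for $\Delta$; the equivalence between $X\text{-CGrph}$ and $\text{Sub}(F_{X})$ then forces not only the source and target graphs but also the unique graph morphism between them to coincide.

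For the $(\Rightarrow)$ direction, given a test $\varphi'\in\hom((U_{i},N_{i}),(X,W_{X}))$, I would set $\varphi:=\varphi'\circ\sigma_{i}$, which is an FGR morphism $(Y,N_{Y})\to(X,W_{X})$ as a composition in $\mathbf{FGR}$. Condition (3) yields $\varphi\circ\sigma=\varphi'\circ\sigma_{i}\circ\sigma=\varphi'\circ\psi_{i}$, so the identity applies, and surjectivity of $\core\mathcal{F}_{\varphi}(\Gamma\to\Delta)$ (which holds by the hypothesis on the original problem) transfers to $\core\mathcal{F}_{\varphi'}(\core\mathcal{F}_{\sigma_{i}}(\Gamma\to\Delta))$. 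For the $(\Leftarrow)$ direction, given any $\varphi\in\hom((Y,N_{Y}),(X,W_{X}))$, condition (2) ensures $\varphi\circ\sigma\in\hom((V,N_{V}),(X,W_{X}))$, and then condition (4) supplies the requisite $i$ and $\varphi'$ satisfying $\varphi\circ\sigma=\varphi'\circ\psi_{i}$; the subproblem hypothesis gives surjectivity of $\core\mathcal{F}_{\varphi'}(\core\mathcal{F}_{\sigma_{i}}(\Gamma\to\Delta))$, and the identity transports this back to $\core\mathcal{F}_{\varphi}(\Gamma\to\Delta)$.

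The one point requiring care is the verification that the identity holds at the level of \emph{morphisms}, not merely of their sources and targets. This, however, is immediate from the fact that between two pointed folded $X$-labeled graphs there is at most one morphism, so once the image subgroups are shown to agree, the induced morphisms in $X\text{-CGrph}$ must coincide. No calculation of labels or folding sequences is needed.
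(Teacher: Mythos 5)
Your proposal is correct and follows essentially the same approach as the paper. Both proofs hinge on the same ingredients: Remark~\ref{rem:Let--we} to translate to the language of subgroups, condition~(1) to identify $\pi_1\Gamma,\pi_1\Delta$ with their preimage-images under $\sigma$, condition~(3) to rewrite $\psi_i$ as $\sigma_i\circ\sigma$, conditions~(2)--(4) (the paper packages them as the derived equality $\im\sigma^*=\bigcup_i\im\psi_i^*$) to pass between $\varphi\circ\sigma$ and $\varphi'\circ\psi_i$, and the uniqueness of morphisms between pointed folded graphs to upgrade agreement of source/target subgroups to agreement of the graph morphisms. The paper phrases this as a single chain of set equalities $\mathscr{P}=\bigcup_i\{\core\mathcal{F}_{\varphi'}(\core\mathcal{F}_{\sigma_i}(\Gamma\to\Delta))\mid\varphi'\}$, whereas you extract the per-morphism identity $\core\mathcal{F}_{\varphi}(\Gamma\to\Delta)=\core\mathcal{F}_{\varphi'}(\core\mathcal{F}_{\sigma_i}(\Gamma\to\Delta))$ and then argue the two directions separately; this is a cosmetic reorganization, not a different argument.
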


Note that not all FGR morphisms in $\hom\left(\left(Y,N_{Y}\right),\left(X,W_{X}\right)\right)$
necessarily factor through one of the $\sigma_{i}$, meaning there
could be $\varphi\in\hom\left(\left(Y,N_{Y}\right),\left(X,W_{X}\right)\right)$
(as is the case in §\ref{sec:The-counterexample}) with no decomposition
as $\varphi=\varphi'\circ\sigma_{i}$ -- this is what makes the new
set of problems potentially simpler.
\begin{proof}
Let $\pi_{1}\left(\Gamma\right)=H$ and $\pi_{1}\left(\Delta\right)=K$.
The morphisms $\psi_{i}$ induce functions $\psi_{i}^{*}\colon\hom\left(\left(U_{i},N_{i}\right),\left(X,W_{X}\right)\right)\to\hom\left(\left(V,N_{V}\right),\left(X,W_{X}\right)\right)$,
and conditions $2,3,4$ imply $5:\im\sigma^{*}=\bigcup_{i}\im\psi_{i}^{*}$.
As $\Gamma\to\Delta$ means that $H\leq K$, for every homomorphism
$\varphi:F_{Y}\to F_{X}$ we have $\varphi\left(H\right)\leq\varphi\left(K\right)$,
hence there is a unique graph morphism $\Gamma_{X}\left(\varphi\left(H\right)\right)\to\Gamma_{X}\left(\varphi\left(K\right)\right)$.
We notice that: 
\begin{align*}
\mathscr{P}= & \left\{ \core\mathcal{F}_{\varphi}\left(\Gamma_{Y}\left(H\right)\to\Gamma_{Y}\left(K\right)\right)|\varphi\in\hom\left(\left(Y,N_{Y}\right),\left(X,W_{X}\right)\right)\right\} \\
= & \left\{ \Gamma_{X}\left(\varphi\left(H\right)\right)\to\Gamma_{X}\left(\varphi\left(K\right)\right)|\varphi\in\hom\left(\left(Y,N_{Y}\right),\left(X,W_{X}\right)\right)\right\} \\
\overset{{\scriptscriptstyle 1}}{=} & \left\{ \Gamma_{X}\left(\varphi\circ\sigma\left(\sigma^{-1}\left(H\right)\right)\right)\to\Gamma_{X}\left(\varphi\circ\sigma\left(\sigma^{-1}\left(K\right)\right)\right)|\varphi\in\hom\left(\left(Y,N_{Y}\right)\left(X,W_{X}\right)\right)\right\} \\
\overset{{\scriptscriptstyle 5}}{=} & \bigcup\nolimits _{i}\left\{ \Gamma_{X}(\varphi'\circ\psi_{i}(\sigma^{-1}(H)))\to\Gamma_{X}(\varphi'\circ\psi_{i}(\sigma^{-1}(K)))|\varphi'\in\hom\left(\left(U_{i},N_{i}\right),\left(X,W_{X}\right)\right)\right\} \\
= & \bigcup\nolimits _{i}\left\{ \core\mathcal{F}_{\varphi'}\left(\Gamma_{U_{i}}(\psi_{i}(\sigma^{-1}(H)))\to\Gamma_{U_{i}}(\psi_{i}(\sigma^{-1}(K)))\right)|\varphi'\in\hom\left(\left(U_{i},N_{i}\right),\left(X,W_{X}\right)\right)\right\} \\
\overset{{\scriptscriptstyle 3}}{=} & \bigcup\nolimits _{i}\left\{ \core\mathcal{F}_{\varphi'}\left(\Gamma_{U_{i}}(\sigma_{i}\circ\sigma(\sigma^{-1}(H)))\to\Gamma_{U_{i}}(\sigma_{i}\circ\sigma(\sigma^{-1}(K)))\right)|\varphi'\in\hom\left(\left(U_{i},N_{i}\right),\left(X,W_{X}\right)\right)\right\} \\
\overset{{\scriptscriptstyle 1}}{=} & \bigcup\nolimits _{i}\left\{ \core\mathcal{F}_{\varphi'}\left(\Gamma_{U_{i}}(\sigma_{i}(H))\to\Gamma_{U_{i}}(\sigma_{i}(K))\right)|\varphi'\in\hom\left(\left(U_{i},N_{i}\right),\left(X,W_{X}\right)\right)\right\} \\
= & \bigcup\nolimits _{i}\left\{ \core\mathcal{F}_{\varphi'}\left(\core\mathcal{F}_{\sigma_{i}}\left(\Gamma\to\Delta\right)\right)|\varphi'\in\hom\left(\left(U_{i},N_{i}\right),\left(X,W_{X}\right)\right)\right\} .\qedhere
\end{align*}
\end{proof}

\section{\label{sec:The-counterexample}The counterexample}

In this section we prove Theorem \ref{thm:main}. showing that $H=\langle bbaba^{-1}\rangle$
and $K=\langle b,aba^{-1}\rangle$ constitute a counterexample to
Conjectures \ref{conj:old}, \ref{conj:main}, \ref{conj:Xge3}.
\begin{thm*}[\ref{thm:main}]
The extension $H\leq K$ is not algebraic, but for every morphism
$\varphi\colon F_{\{a,b\}}\rightarrow F_{X}$ with $X$ arbitrary
and $\varphi(a),\varphi(b)\neq1$, the graph morphism $\Gamma_{X}\left(\varphi\left(H\right)\right)\rightarrow\Gamma_{X}\left(\varphi(K)\right)$
is surjective.
\end{thm*}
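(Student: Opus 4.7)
The proof will split into two completely separate arguments, and the second one is where all the work lies.

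\textbf{Non-algebraicity of $H\leq K$.} This part is essentially immediate once one changes coordinates inside $K$. Write $c=b$ and $d=aba^{-1}$, so $K=\langle c,d\rangle$ is free of rank $2$ on $\{c,d\}$ and $H=\langle c^{2}d\rangle$. Since $\{c^{2}d,c\}$ is a Nielsen basis of $K$ (we recover $d=c^{-2}(c^{2}d)$), the cyclic subgroup $J=H$ itself is a proper free factor of $K$ satisfying $H\leq J<K$, so $K$ is not an algebraic extension of $H$.

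\textbf{Setup for the surjectivity statement.} The plan is to translate the second assertion into a surjectivity problem in the sense of Problem \ref{prob:Given-a-morphism} and then run the recursive decomposition procedure from \S\ref{sec:General-setting}. Set $U=\{a,b\}$, let $\Gamma=\Gamma_{U}(H)$ and $\Delta=\Gamma_{U}(K)$, and take $N_{U}=\varnothing$. The hypothesis $\varphi(a),\varphi(b)\neq1$ says precisely that $\varphi$ is a non-degenerate homomorphism, which by definition is an FGR morphism $(U,\varnothing)\to(X,W_{X})$. By Remark \ref{rem:Let--we} the graph morphism $\Gamma_{X}(\varphi(H))\to\Gamma_{X}(\varphi(K))$ is exactly $\core\mathcal{F}_{\varphi}(\Gamma\to\Delta)$, so the theorem is equivalent to the surjectivity problem $\bigl(\Gamma\to\Delta,(U,\varnothing)\bigr)$ resolving positively. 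I will first observe that $bbaba^{-1}$ is cyclically reduced (it begins with $b$ and ends with $a^{-1}$), so by Remark \ref{rem:Following-corollary-} and Corollary \ref{cor:Let--be} there is no loss in working with these representatives, and I check by direct inspection of the graphs that $\Gamma\twoheadrightarrow\Delta$ under the identity homomorphism (so we are not in the trivial negative case).

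\textbf{Running the recursive partition.} Since $W(\Gamma)\setminus N_{U}\neq\varnothing$, the initial problem is ambiguous, and I will split it using \S\ref{subsec:Partition-of-Hom}. My strategy is to always fold along a carefully chosen edge $x.y\in W(\Gamma)\setminus N_{U}$, producing at most five subproblems $\bigl(\core\mathcal{F}_{\psi_{x.y}^{i}}(\Gamma\to\Delta),(U_{i},N_{U_{i}})\bigr)$, and to exploit three reduction tools to keep the resulting tree finite: (i) whenever all Whitehead edges at $\Gamma$ are restricted, stop at a stencil space (positive case); (ii) whenever a subproblem is FGR-isomorphic to, or contained in, one already processed (via the inclusion criterion below Problem \ref{prob:Given-a-morphism}), mark it as closed; (iii) apply Proposition \ref{prop:COC} (change of coordinates) with $\sigma$ given by the Nielsen substitution $c\mapsto b$, $d\mapsto aba^{-1}$ whose image contains $\pi_{1}(\Delta)=K$, which collapses many $\varphi$'s to a common $\psi_{i}$ and shrinks the tree dramatically; and (iv) apply the triangle rule (Proposition \ref{prop:triangle}) whenever a restriction $x.y\in N_{U}$ together with an unrestricted pair $z.y,z.x$ appears, to rule out one of the branches.

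\textbf{The expected obstacle and finiteness.} The main difficulty, and where the proof really earns its content, is verifying that after a bounded amount of splitting every branch either becomes a stencil space, loops back into an earlier ambiguous problem (so that Theorem \ref{thm:decomposition} guarantees termination, as in Example \ref{exa:comutator}), or is killed by the triangle rule. A priori the branching is fivefold and unbounded, and the substitutions $\psi_{x.y}^{i}$ grow the graphs $\core\mathcal{F}_{\psi_{x.y}^{i}}(\Gamma)$ as well as the restriction sets in incompatible ways; the reason to expect this to succeed for the specific word $bbaba^{-1}$ is the structural property promised in \S\ref{sec:TBD-finiteness} of the paper, which I will invoke (or verify by hand on the finite list of graphs that arise) to certify that no strictly new ambiguous problem is generated beyond some explicit finite list. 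Concretely, my plan is to maintain a table of ambiguous problems encountered, and for each new one check it against the table up to FGR-isomorphism or inclusion; once the table is closed under the five-way split and the change-of-coordinates reduction, and every leaf of the tree is a stencil space with a surjective underlying graph morphism, Theorem \ref{thm:decomposition} allows us to conclude that $\bigl(\Gamma\to\Delta,(U,\varnothing)\bigr)$ resolves positively, proving the theorem.
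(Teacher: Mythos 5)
Your first paragraph (non-algebraicity) is correct and matches the paper: after rewriting on the free generators $c=b$, $d=aba^{-1}$ of $K$, the word $c^{2}d$ is a member of the Nielsen basis $\{c^{2}d,c\}$, hence $H$ is a proper free factor of $K$. Your translation of the second assertion into the surjectivity problem $(\Gamma_{\{a,b\}}(H)\to\Gamma_{\{a,b\}}(K),(\{a,b\},\varnothing))$, and the identification of the relevant toolkit (recursive splitting, triangle rule, change of coordinates, inclusion/equivalence of subproblems) are also correct and faithful to the paper's framework.

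However, the second half of your proposal is a plan, not a proof: the entire content of the theorem lies in carrying out the case analysis and verifying it closes, and that work is absent. Two specific problems. First, you gesture at a change of coordinates via $\sigma(\alpha)=b$, $\sigma(\beta)=aba^{-1}$, but you omit the actual mechanism that makes this productive: since $b$ and $aba^{-1}$ are conjugate in $F_{\{a,b\}}$, for any non-degenerate $\varphi$ the words $\varphi(b)$ and $\varphi(aba^{-1})$ are conjugate in $F_{X}$, so they can be written as $\bar{x}\cdot\bar{u}\cdot\bar{v}\cdot\bar{x}^{-1}$ and $\bar{y}\cdot\bar{v}\cdot\bar{u}\cdot\bar{y}^{-1}$ with $\bar{v}\bar{u}$ cyclically reduced. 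It is the eight-fold case split on which of $\bar{x},\bar{y},\bar{v}$ are trivial that produces a finite, tractable family of initial problems; without this observation the naive recursion on $(\{a,b\},\varnothing)$ is not obviously controllable. Second, you propose to ``invoke'' the stencil-finiteness property of \S\ref{sec:TBD-finiteness} to certify that only finitely many new ambiguous problems arise. That does not work: the paper explicitly records that stencil-finiteness is a \emph{necessary} condition for the process to close and that its sufficiency is an open question, so it cannot be cited to guarantee termination. In the paper this finiteness is instead established by brute force, by explicitly listing Cases $1$ through $8$, running the partition (tens of subcases), and verifying by hand that every leaf is either a stencil space with a surjective graph morphism or is contained in an earlier case. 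Until you produce that verification, the proof is incomplete.
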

First, $H$ is a proper free factor of $K$, so in particular $H\leq K$
is not an algebraic extension. Following Section §\ref{sec:General-setting},
we need to show that the problem $\left(\Gamma_{\left\{ a,b\right\} }\left(H\right)\to\Gamma_{\left\{ a,b\right\} }\left(K\right),\left(\left\{ a,b\right\} ,\varnothing\right)\right)$
resolves positively. By a change of coordinate (Proposition \ref{prop:COC})
we replace this problem with eight problems that are easier to analyze.
Let $\left(V,N_{V}\right)=\left(\left\{ \alpha,\beta\right\} ,\varnothing\right)$,
and
\[
\sigma:F_{\left\{ \alpha,\beta\right\} }\to F_{\left\{ a,b\right\} },\quad\sigma\left(\alpha\right)=b,\quad\sigma\left(\beta\right)=aba^{-1}.
\]
We notice that $H<K\leq\im\sigma$. For any non-degenerate $\varphi\colon F_{\{a,b\}}\rightarrow F_{X}$,
the words $\varphi\left(b\right)=\varphi\circ\sigma\left(\alpha\right)$
and $\varphi\left(aba^{-1}\right)=\varphi\circ\sigma\left(\beta\right)$
are conjugate, hence there exist reduced words $\overline{x},\overline{y},\overline{u},\overline{v}\in F_{X}$
such that $\varphi(b)=\overline{x}\cdot\overline{u}\cdot\overline{v}\cdot\overline{x}^{-1}$,
$\varphi(aba^{-1})=\overline{y}\cdot\overline{v}\cdot\overline{u}\cdot\overline{y}^{-1}$
(in particular, $\overline{v}\overline{u}$ and $\overline{u}\overline{v}$
are cyclically reduced). By non-degeneracy we can also assume $\overline{u}\neq1$,
and if $\overline{v}=1$ then $\overline{u}$ is cyclically reduced.
We perform a change of coordinates according to the eight possible
cases, with $(U_{i},N_{i})$, $\psi_{i}$ and $\sigma_{i}$ being:\medskip{}
\\
\hspace*{\fill}%
\begin{tabular}{|c|c|c|c|c|c|c|c|}
\hline 
\# & $\overline{x}$ & $\overline{y}$ & $\overline{v}$ & $U_{i}$ & $N_{i}$ & $\psi_{i}(\alpha),\psi_{i}(\beta)$ & $\sigma_{i}(a),\sigma_{i}(b)$\tabularnewline
\hline 
\hline 
1 & $\negmedspace=\negmedspace1\negmedspace$ & $\negmedspace=\negmedspace1\negmedspace$ & $\negmedspace=\negmedspace1\negmedspace$ & $\{u\}$ & $\{u.u^{-1}\}$ & $u,u$ & $u,u$\tabularnewline
\hline 
2 & $\negmedspace\neq\negmedspace1\negmedspace$ & $\negmedspace=\negmedspace1\negmedspace$ & $\negmedspace=\negmedspace1\negmedspace$ & $\{y,u\}$ & $\{y.u^{-1},u.y,u.u^{-1}\}$ & $u,yuy^{-1}$ & $y,u$\tabularnewline
\hline 
3 & $\negmedspace=\negmedspace1\negmedspace$ & $\negmedspace\neq\negmedspace1\negmedspace$ & $\negmedspace=\negmedspace1\negmedspace$ & $\{x,u\}$ & $\{x.u^{-1},u.x,u.u^{-1}\}$ & $xux^{-1},u$ & $x^{-1},xux^{-1}$\tabularnewline
\hline 
4 & $\negmedspace\neq\negmedspace1\negmedspace$ & $\negmedspace\neq\negmedspace1\negmedspace$ & $\negmedspace=\negmedspace1\negmedspace$ & $\{x,u,y\}$ & ${\displaystyle \left\{ {x.u^{-1},u.x,u.u^{-1},\atop y.u^{-1},u.y,y.x}\right\} }$ & $xux^{-1},yuy^{-1}$ & $yx^{-1},xux^{-1}$\tabularnewline
\hline 
5 & $\negmedspace=\negmedspace1\negmedspace$ & $\negmedspace=\negmedspace1\negmedspace$ & $\negmedspace\neq\negmedspace1\negmedspace$ & $\{v,u\}$ & $\{v.u^{-1},u.v^{-1}\}$ & $uv,vu$ & $u^{-1},uv$\tabularnewline
\hline 
6 & $\negmedspace\neq\negmedspace1\negmedspace$ & $\negmedspace=\negmedspace1\negmedspace$ & $\negmedspace\neq\negmedspace1\negmedspace$ & $\{v,u,y\}$ & $\negmedspace\{v.u^{-1},u.v^{-1},y.v^{-1},u.y\}\negmedspace$ & $uv,yvuy^{-1}$ & $yv,uv$\tabularnewline
\hline 
7 & $\negmedspace=\negmedspace1\negmedspace$ & $\negmedspace\neq\negmedspace1\negmedspace$ & $\negmedspace\neq\negmedspace1\negmedspace$ & $\{v,u,x\}$ & $\negmedspace\{v.u^{-1},u.v^{-1},x.u^{-1},v.x\}\negmedspace$ & $xuvx^{-1},vu$ & $u^{-1}x^{-1},xuvx^{-1}$\tabularnewline
\hline 
8 & $\negmedspace\neq\negmedspace1\negmedspace$ & $\negmedspace\neq\negmedspace1\negmedspace$ & $\negmedspace\neq\negmedspace1\negmedspace$ & $\negmedspace\{v,u,y,x\}\negmedspace$ & $\left\{ {\displaystyle {v.u^{-1},u.v^{-1},x.u^{-1},\atop v.x,\,y.v^{-1},u.y}}\right\} $ & $\negmedspace xuvx^{-1},yvuy^{-1}\negmedspace$ & $\negmedspace yu^{-1}x^{-1},xuvx^{-1}\negmedspace\negmedspace$\tabularnewline
\hline 
\end{tabular}\hspace*{\fill}\medskip{}

For every $1\leq i\leq8$ we denote $\Gamma_{i}=\Gamma\left(\sigma_{i}\left(H\right)\right)$
and $\Delta_{i}=\Gamma\left(\sigma_{i}\left(K\right)\right)$. We
obtain eight problems, and we proceed to split them and identify all
the stencil spaces. We index the cases in the following manner: if
Case $i$ is not a stencil space it splits into five cases Case $i.1,i.2,\dots i.5$.
For each case, the morphism $\psi_{i.j}$ is the folding morphism
this subset of morphisms factors through. The co-domain of $\psi_{i.j}$
is the FGR object $(U_{i.j},N_{i.j})$, and the graphs are indexed
by $\Gamma_{i.j}=\core\mathcal{F}_{\psi_{i.j}}\left(\Gamma_{i}\right)$
and $\Delta_{i.j}=\core\mathcal{F}_{\psi_{i.j}}\left(\Delta_{i}\right)$,
and the $\Delta$ graphs are depicted in Figure \ref{fig:The-graphs-}.
In each case either $W(\Gamma_{i})\backslash N_{i}$ is empty, hence
$(U_{i},N_{i})$ is a stencil space of $\Gamma_{i}$ and we can resolve
the subproblem, or it is not empty, and then we continue splitting.
Remark \ref{rem:Following-corollary-} allows us to conjugate, or
equivalently to change the base points of the graphs.
\begin{casenv}
\item [Case 5.]Here $\Gamma_{5}=\Gamma(\langle uvuvvu\rangle)\twoheadrightarrow\Delta_{5}$
and $W\left(\Gamma_{5}\right)\backslash N_{5}=\{u.u^{-1},v.v^{-1}\}$.
We split by $u.u^{-1}$.
\item [Case 5.1.]$\psi_{5.1}=\psi_{u.u^{-1}}^{1}$, $N_{5.1}=\{v.u^{-1},u.v^{-1},u.u^{-1}\}$,
$\Gamma_{5.1}=\Gamma_{5}$ and $\Delta_{5.1}=\Delta_{5}$ (hence $\Gamma_{5.1}\twoheadrightarrow\Delta_{5.1}$).
$W\left(\Gamma_{5.1}\right)\backslash N_{U_{5.1}}=\{v.v^{-1}\}$ (so
we must split by $v.v^{-1}$).
\item [Case 5.1.1.]$\psi_{5.1.1}=\psi_{v.v^{-1}}^{1}$, $N_{5.1.1}=\{v.u^{-1},u.v^{-1},u.u^{-1},v.v^{-1}\}$.
Again $\Gamma_{5}=\Gamma_{5.1.1},\Delta_{5.1.1}=\Delta_{5}$ and $W\left(\Gamma_{5.1}\right)\backslash N_{5.1}=\varnothing$,
hence the subcase resolves positively.
\item [Case 5.1.2.] $\psi_{5.1.2}=\psi_{v.v^{-1}}^{2}$, $v\mapsto t^{-1}vt$,
$N_{5.1.2}=\{t.u^{-1},u.t,u.u^{-1},v.t^{-1},t^{-1}.v^{-1},v.v^{-1}\}$;
$\Gamma_{5.1.2}=\Gamma\left(\left\langle ut^{-1}vtut^{-1}vvtu\right\rangle \right)\twoheadrightarrow\Delta_{5.1.2}$
and $W\left(\Gamma_{5.1.2}\right)\backslash N_{5.1.2}=\varnothing$
hence the subcase resolves positively.
\item [Case 5.2.] $\psi_{5.2}=\psi_{u.u^{-1}}^{2}$, $u\mapsto t^{-1}ut$,
$N_{5.2}=\{v.t,u.t^{-1},u.u^{-1},t^{-1}.u^{-1}\}$; $\Gamma_{5.2}=\Gamma\left(\left\langle t^{-t}utvt^{-1}utvvt^{-1}ut\right\rangle \right)$
is not cyclically reduced so we conjugate and set $\Gamma_{5.2}=\Gamma\left(\left\langle utvt^{-1}utvvt^{-1}u\right\rangle \right)\twoheadrightarrow\Delta_{5.2}$
and $W\left(\Gamma_{5.2}\right)\backslash N_{5.2}=\{v.v^{-1}\}$.
We split by $v.v^{-1}$.
\item [Case 5.2.1.] $\psi_{5.1}=\psi_{v.v^{-1}}^{1}$, $N_{U_{5.2.1}}=\{v.t,u.t^{-1},u.u^{-1},t^{-1}.u^{-1},v.v^{-1}\}$;
$\Gamma_{5.2.1}=\Gamma_{5.2},\Delta_{5.2.1}=\Gamma_{5.2}$ and $W\left(\Gamma_{5.2.1}\right)\backslash N_{5.2.1}=\varnothing$,
hence the subcase resolves positively.
\item [Case 5.2.2.] $\psi_{5.2.2}=\psi_{v.v^{-1}}^{2}$, $v\mapsto svs^{-1}$,
$N_{5.2.2}=\{s.t,u.t^{-1},u.u^{-1},t^{-1}.u^{-1},v.s^{-1},s^{-1}.v^{-1},v.v^{-1}\}$;
$\Gamma_{5.2.2}=\Gamma\left(\left\langle uts^{-1}vst^{-1}uts^{-1}vvst^{-1}u\right\rangle \right)\twoheadrightarrow\Delta_{5.2.2}$
and $W\left(\Gamma_{5.2.2}\right)\backslash N_{5.2.2}$, hence the
subcase resolves positively.
\item [Case 2.] Here $\Gamma_{2}=\Gamma\left(\left\langle uuyuy^{-1}\right\rangle \right)\twoheadrightarrow\Delta_{2}$
and $W\left(\Gamma_{2}\right)\backslash N_{2}=\{u^{-1}.y^{-1},u.y^{-1}\}$.
We notice that $u.u^{-1}\in N_{2}$, so we can use the triangle rule
(Proposition \ref{prop:triangle}) to deduce that every morphism factors
through either $\psi_{u^{-1}.y^{-1}}^{1}$ or $\psi_{u.y^{-1}}^{1}$.
\item [Case 2.1.]We have two cases: $\psi_{2.1}=\psi_{u^{-1}.y^{-1}}^{1}$
and $N_{2.1}=\{y.u^{-1},u.y,u.u^{-1},u^{-1}.y^{-1}\}$, or $\psi_{2.1'}=\psi_{u.y^{-1}}^{1}$
and $N_{2.1'}=\{y.u^{-1},u.y,u.u^{-1},u.y^{-1}\}$. These cases are
equivalent, as $\gamma\colon\left(U_{2},N_{2.1}\right)\to\left(U_{2},N_{2.1'}\right)$
defined by $u\mapsto u^{-1},y\mapsto y$ satisfies $\mathcal{F}_{\gamma}\left(\Gamma_{2.1}\to\Delta_{2.1}\right)=\Gamma_{2.1'}\to\Delta_{2.1'}$.
Finally, $\Gamma_{2.1}=\Gamma_{2},\,\Delta_{2.1}=\Delta_{2}$, and
$W\left(\Gamma_{2.1}\right)\backslash N_{2.1}=\{u.y^{-1}\}$.
\item [Case 2.1.1.] $\psi_{2.1.1}=\psi_{u.y^{-1}}^{1}$ and $N_{2.1.1}=\{y.u^{-1},u.y,u.u^{-1},u^{-1}.y^{-1},u.y^{-1}\}$;
$\Gamma_{2.1.1}=\Gamma_{2.1},\Delta_{2.1.1}=\Delta_{2.1}$ and $W\left(\Gamma_{2.1.1}\right)\backslash N_{2.1.1}=\varnothing$,
hence the subcase resolves positively.
\item [Case 2.1.2.] $\psi_{2.1.2}=\psi_{u.y^{-1}}^{2}$, $u\mapsto ut,\,y\mapsto t^{-1}y$
and $N_{2.1.2}=\{t.u^{-1},u^{-1}.y,t.y,t.u^{-1},u.t^{-1},\allowbreak y^{-1}.t^{-1},u.y^{-1}\}$;
$\Gamma_{2.1.2}=\Gamma\left(\left\langle utuyuty^{-1}t\right\rangle \right)\twoheadrightarrow\Delta_{2.1.2}$
and $W\left(\Gamma_{2.1.2}\right)\backslash N_{2.1.2}=\varnothing$,
hence the subcase resolves positively.
\item [Case 2.1.3.] $\psi_{2.1.3}=\psi_{u.y^{-1}}^{3}$, $u\mapsto uy^{-1}$,
$N_{2.1.3}=\{y.u^{-1},y^{-1}.y,y^{-1}.u^{-1},u.y\}$ and $\Gamma_{2.1.3}=\Gamma\left(\left\langle uy^{-1}uuy^{-1}y^{-1}\right\rangle \right)\twoheadrightarrow\Delta_{2.1.3}$.
We notice that $\gamma\colon\left(U_{5},N_{5}\right)\to\left(U_{2.1.3},N_{2.1.3}\right)$
defined by $u\mapsto u^{-1},v\mapsto y$ satisfies $\mathcal{F}_{\gamma}\left(\Gamma_{5}\to\Delta_{5}\right)$
is conjugate to $\Gamma_{2.1.3}\to\Delta_{2.1.3}$, which implies
that this subproblem is contained in Case 5.
\item [Case 2.1.4.] $\psi_{2.1.4}=\psi_{u.y^{-1}}^{4}$, $y\mapsto u^{-1}y$
and $N_{2.1.4}=\{y.u^{-1},u.y,u.u^{-1},u^{-1}.y^{-1}\}$; $N_{2.1.4}=N_{2.1}$
and $\Gamma_{2.1.4}=\Gamma_{2.1},\Delta_{2.1.4}=\Delta_{2.1}$, so
this is the same problem as Case 2.1.
\item [Case 3.] Here $\Gamma_{3}=\Gamma\left(\left\langle xuux^{-1}u\right\rangle \right)\twoheadrightarrow\Delta_{3}$
and $W\left(\Gamma_{3}\right)\backslash N_{3}=\{x^{-1}.u^{-1},u.x^{-1}\}$
We notice that $u.u^{-1}\in N_{2}$ so we can split using the triangle
rule.
\item [Case 3.1.] We have two cases $\psi_{3.1}=\psi_{u^{-1}.x^{-1}}^{1}$,
$N_{3.1}=\{x.u^{-1},u.x,u.u^{-1},u.x^{-1}\}$ and $\psi_{3.1'}=\psi_{u.x^{-1}}^{1}$,
$N_{3.1'}=\{x.u^{-1},u.x,u.u^{-1},u^{-1}.x^{-1}\}$. These cases are
equivalent, as $\gamma\colon\left(U_{3},N_{3.1}\right)\to\left(U_{3},N_{3.1'}\right)$
defined by $u\mapsto u^{-1},x\mapsto x$ satisfies $\mathcal{F}_{\gamma}\left(\Gamma_{3.1}\to\Delta_{3.1}\right)=\Gamma_{3.1'}\to\Delta_{3.1'}$.
Finally $\Gamma_{3.1}=\Gamma_{3},\Delta_{3.1}=\Delta_{3}$ and $W\left(\Gamma_{3.1}\right)\backslash N_{3.1}=\{u^{-1}.x^{-1}\}$.
We split by $u^{-1}.x^{-1}$.
\item [Case 3.1.1.] $\psi_{3.1.1}=\psi_{u^{-1}.x^{-1}}^{1}$ and $N_{3.1.1}=\{x.u^{-1},u.x,u.u^{-1},u.x^{-1},x^{-1}.u^{-1}\}$;
$\Gamma_{3.1.1}=\Gamma_{3.1},\Delta_{3.1.1}=\Delta_{3.1}$ and $W\left(\Gamma_{3.1.1}\right)\backslash N_{3.1.1}=\varnothing$
hence the subcase resolves positively.
\item [Case 3.1.2.] $\psi_{3.1.2}=\psi_{u^{-1}.x^{-1}}^{2}$, $x\mapsto t^{-1}x,\,u\mapsto t^{-1}u$
and $N_{3.1.2}=\{x.t,u.x,u.t,t^{-1}.x^{-1},\allowbreak t^{-1}.u^{-1},x^{-1}.u^{-1}\}$;
$\Gamma_{3.1.2}=\Gamma\left(\left\langle t^{-1}xt^{-1}ut^{-1}ux^{-1}u\right\rangle \right)\twoheadrightarrow\Delta_{3.1.2}$
and $W\left(\Gamma_{3.1.2}\right)\backslash N_{3.1.2}=\varnothing$,
hence the subcase resolves positively.
\item [Case 3.1.3.] $\psi_{3.1.3}=\psi_{u^{-1}.x^{-1}}^{3}$, $x\mapsto ux$
and $N_{3.1.3}=\{x.u^{-1},u.x,u.u^{-1},u.x^{-1}\}$; $N_{3.1.3}=N_{3.1}$
and $\Gamma_{3.1.3}=\Gamma_{3.1},\Delta_{3.1.1}=\Delta_{3.1}$. This
is the same problem as Case 3.1.
\item [Case 3.1.4.] $\psi_{3.1.4}=\psi_{u^{-1}.x^{-1}}^{4}$, $u\mapsto xu$
and $N_{3.1.4}=\{x.x^{-1},u.x,u.x^{-1},x.u^{-1}\}$; $\Gamma_{3.1.4}=\Gamma\left(\left\langle xxuxuu\right\rangle \right)\twoheadrightarrow\Delta_{3.1.4}$.
We notice that $\gamma\colon\left(U_{5},N_{5}\right)\to\left(U_{3.1.4},N_{3.1.4}\right)$
defined by $u\mapsto x,v\mapsto u$ satisfies that $\mathcal{F}_{\gamma}\left(\Gamma_{5}\to\Delta_{5}\right)$
is conjugate to $\Gamma_{3.1.4}\to\Delta_{3.1.4}$, hence this is
contained in Case 5.
\item [Case 4.] Here $\Gamma_{4}=\left(\left\langle xuux^{-1}yuy^{-1}\right\rangle \right)\twoheadrightarrow\Delta_{4}$
and $W\left(\Gamma_{4}\right)\backslash N_{4}=\{x^{-1}.y^{-1}\}$.
We split by $x^{-1}.y^{-1}\in W\left(\Gamma_{4}\right)\backslash N_{4}$.
\item [Case 4.1.] $\psi_{4.1}=\psi_{v.v^{-1}}^{1}$, $N_{4.1}=\{x.u^{-1},u.x,u.u^{-1},y.u^{-1},u.y,y.x,x^{-1}.y^{-1}\}$;
$\Gamma_{4.1}=\Gamma_{4},\Delta_{4.1}=\Delta_{4}$ and $W\left(\Gamma_{4.1}\right)\backslash N_{4.1}=\varnothing$,
hence the subcase resolves positively.
\item [Case 4.2.] $\psi_{4.2}=\psi_{x^{-1}.y^{-1}}^{2}$, $x\mapsto t^{-1}x$,
$y\mapsto t^{-1}y$ and $N_{4.2}=\{x.u^{-1},u.x,u.u^{-1},y.u^{-1}y.u,y.x,\allowbreak x^{-1}.t^{-1},y^{-1}.t^{-1},y^{-1}.x^{-1}\}$;
after conjugation $\Gamma_{4.2}=\Gamma_{4}$, $\Delta_{4.2}=\Delta_{4}$
and $W\left(\Gamma_{4.2}\right)\backslash N_{4.2}=\varnothing$, hence
the subcase resolves positively.
\item [Case 4.3.] $\psi_{4.3}=\psi_{x^{-1}.y^{-1}}^{3}$, $x\mapsto yx$
and $N_{4.3}=\{x.u^{-1},u.x,u.u^{-1},y.u^{-1},u.y,y.x,y.x^{-1}\}$.
We notice that $\gamma\colon\left(U_{3},N_{3}\right)\to\left(U_{4.3},N_{4.3}\right)$
defined by $u\mapsto u,x\mapsto x$ satisfies $\mathcal{F}_{\gamma}\left(\Gamma_{3}\to\Delta_{3}\right)=\Gamma_{4.3}\to\Delta_{4.3}$,
hence this problem is contained in Case 3.
\item [Case 4.4.] $\psi_{4.4}=\psi_{x^{-1}.y^{-1}}^{4}$, $y\mapsto xy$
and $N_{4.4}=\{x.u^{-1},u.x,u.u^{-1},y.u^{-1},u.y,y.x,x.y^{-1}\}$.
We notice that $\gamma\colon\left(U_{2},N_{2}\right)\to\left(U_{4.4},N_{4.4}\right)$
defined by $u\mapsto u,y\mapsto y$ satisfies $\mathcal{F}_{\gamma}\left(\Gamma_{2}\to\Delta_{2}\right)=\Gamma_{4.4}\to\Delta_{4.4}$,
hence this problem is contained in Case 2.
\item [Case 6.] Here $\Gamma_{6}=\left(\left\langle uvuvyvuy^{-1}\right\rangle \right)\twoheadrightarrow\Delta_{6}$
and $W\left(\Gamma_{6}\right)\backslash N_{6}=\{y^{-1}.u^{-1},v.y^{-1}\}$.
We notice that $v.u^{-1}\in N_{2}$ so we can split using the triangle
rule.
\item [Case 6.1.] We have two cases $\psi_{6.1}=\psi_{u^{-1}.y^{-1}}^{1}$,
$N_{6.1}=\{v.u^{-1},u.v^{-1},y.v^{-1},u.y,y^{-1}.u^{-1}\}$ and $\psi_{6.1'}=\psi_{v.y^{-1}}^{1}$,
$N_{6.1'}=\{v.u^{-1},u.v^{-1},y.v^{-1},u.y,v.y^{-1}\}$. These cases
are equivalent, as $\gamma\colon\left(U_{6},N_{6.1}\right)\to\left(U_{6},N_{6.1'}\right)$
defined by $u\mapsto v^{-1},v\mapsto u^{-1}$ satisfies $\mathcal{F}_{\gamma}\left(\Gamma_{6.1}\to\Delta_{6.1}\right)=\Gamma_{6.1'}\to\Delta_{6.1'}$.
$\Gamma_{6.1}=\Gamma_{6},\Delta_{6.1}=\Delta_{6}$ and $W\left(\Gamma_{6.1}\right)\backslash N_{6.1}=\{v.y^{-1}\}$.
We split by $v.y^{-1}$.
\item [Case 6.1.1.] $\psi_{6.1.1}=\psi_{v.y^{-1}}^{1}$ and $N_{6.1.1}=\{v.u^{-1},u.v^{-1},y.v^{-1},u.y,y^{-1}.u^{-1},v.y^{-1}\}$;
$\Gamma_{6.1.1}=\Gamma_{6},\Delta_{6.1.1}=\Gamma_{6}$ and $W\left(\Gamma_{6.1.1}\right)\backslash N_{6.1.1}=\varnothing$,
hence the subcase resolves positively.
\item [Case 6.1.2.] $\psi_{6.1.2}=\psi_{v.y^{-1}}^{2}$, $v\mapsto vt,\,y\mapsto t^{-1}y$
and $N_{6.1.2}=\{t.u^{-1},u.v^{-1},y.v^{-1},u.y,v.t^{-1},\allowbreak t^{-1}.y^{-1}\}$;
$\Gamma_{6.1.2}=\Gamma\left(\left\langle uvtuvyvtuy^{-1}t\right\rangle \right)\twoheadrightarrow\Delta_{6.1.2}$
and $W\left(\Gamma_{6.1.2}\right)\backslash N_{6.1.2}=\varnothing$,
hence the subcase resolves positively.
\item [Case 6.1.3.] $\psi_{6.1.3}=\psi_{v.y^{-1}}^{3}$, $v\mapsto vy^{-1}$,
$N_{6.1.3}=\{y^{-1}.u^{-1},u.v^{-1},y.v^{-1},u.y,y^{-1}.u^{-1},v.y\}$
and $\Gamma_{6.1.3}=\Gamma\left(\left\langle y^{-1}uvy^{-1}uvvy^{-1}u\right\rangle \right)$.
We notice that $\gamma\colon\left(U_{5},N_{5}\right)\to\left(U_{6.1.3},N_{6.1.3}\right)$
defined by $u\mapsto y^{-1}u,v\mapsto v$ satisfies $\mathcal{F}_{\gamma}\left(\Gamma_{5}\to\Delta_{5}\right)$
is conjugate to $\Gamma_{6.1.3}\to\Delta_{6.1.3}$ therefore this
problem is contained in Case 5.
\item [Case 6.1.4.] $\psi_{6.1.4}=\psi_{v.y^{-1}}^{4}$, $y\mapsto v^{-1}y$,
$N_{6.1.4}=\{v.u^{-1},u.v^{-1},y.v^{-1},u.y,v^{-1}.y^{-1}\}$ and
$\Gamma_{6.1.4}\left(\left\langle vuvuyvuy^{-1}\right\rangle \right)$.
We notice that $\gamma\colon\left(U_{2},N_{2}\right)\to\left(U_{6.1.4},N_{6.1.4}\right)$
defined by $u\mapsto vu$ satisfies $\mathcal{F}_{\gamma}\left(\Gamma_{2}\to\Delta_{2}\right)=\Gamma_{6.1.4}\to\Delta_{6.1.4}$,
hence this problem is contained in Case 2.
\item [Case 7.] Here $\Gamma_{7}=\left(\left\langle xuvuvx^{-1}vu\right\rangle \right)\twoheadrightarrow\Delta_{7}$
and $W\left(\Gamma_{7}\right)\backslash N_{7}=\{x^{-1}.v^{-1},u.x^{-1}\}$.
We notice that $u.v^{-1}\in N_{2}$ so we can split using the triangle
rule.
\item [Case 7.1] We have two cases $\psi_{7.1}=\psi_{u.x^{-1}}^{1}$, $N_{7.1}=\{v.u^{-1},u.v^{-1},x.u^{-1},v.x,x^{-1}.u\}$
and $\psi_{7.1'}=\psi_{x^{-1}.v}^{1}$, , $N_{7.1'}=\{v.u^{-1},u.v^{-1},x.u^{-1},v.x,x^{-1}.v^{-1}\}$.
These cases are equivalent, as $\gamma\colon\left(U_{7},N_{7.1}\right)\to\left(U_{7},N_{7.1'}\right)$
defined by $u\mapsto v^{-1},v\mapsto u^{-1}$ satisfies $\mathcal{F}_{\gamma}\left(\Gamma_{7.1}\to\Delta_{7.1}\right)=\Gamma_{7.1'}\to\Delta_{7.1'}$;
$\Gamma_{7.1}=\Gamma_{7},\Delta_{7.1}=\Gamma_{7}$ and $W\left(\Gamma_{7.1}\right)\backslash N_{7.1}=\{x^{-1}.v^{-1}\}$.
We split by $x^{-1}.v^{-1}$.
\item [Case 7.1.1.] $\psi_{7.1.1}=\psi_{x^{-1}.v^{-1}}^{1}$ and $N_{7.1.1}=\{v.u^{-1},u.v^{-1},x.u^{-1},v.x,x^{-1}.u,x^{-1}.v^{-1}\}$;
$\Gamma_{7.1.1}=\Gamma_{7},\Delta_{7.1.1}=\Delta_{7}$ and $W\left(\Gamma_{7.1.1}\right)\backslash N_{7.1.1}=\varnothing$,
hence the subcase resolves positively.
\item [Case 7.1.2.] $\psi_{7.1.2}=\psi_{x^{-1}.v^{-1}}^{2}$, $x\mapsto t^{-1}x,\,v\mapsto t^{-1}v$
and $N_{7.1.2}=\{v.u^{-1},u.t,x.u^{-1},v.x,t^{-1}.x^{-1},t^{-1}.v^{-1},x^{-1}.v^{-1}\}$;
$\Gamma_{7.1.2}=\Gamma\left(\left\langle ut^{-1}vut^{-1}vx^{-1}vut^{-1}xu^{-1}\right\rangle \right)\twoheadrightarrow\Delta_{7.1.2}$
and $W\left(\Gamma_{7.1.2}\right)\backslash N_{7.1.2}=\varnothing$,
hence the subcase resolves positively.
\item [Case 7.1.3] $\psi_{7.1.3}=\psi_{x^{-1}.v^{-1}}^{4}$, $x\mapsto vx$,
$N_{7.1.3}=\{v.u^{-1},u.v^{-1},x.u^{-1},v.x,v.x^{-1}\}$ and $\Gamma_{7.1.3}\left(\left\langle xuvuvx^{-1}uv\right\rangle \right)$.
We notice that $\gamma\colon\left(U_{3},N_{3}\right)\to\left(U_{7.1.3},N_{7.1.3}\right)$
defined by $u\mapsto uv,x\mapsto x$ satisfies $\mathcal{F}_{\gamma}\left(\Gamma_{3}\to\Delta_{3}\right)=\Gamma_{7.1.3}\to\Delta_{7.1.3}$,
hence this problem is contained in Case 3.
\item [Case 7.1.4] $\psi_{7.1.3}=\psi_{x^{-1}.v^{-1}}^{4}$, $v\mapsto xv$,
$N_{7.1.4}=\{v.u^{-1},u.x^{-1},x.u^{-1},v.x,x.v^{-1}\}$ $\Gamma_{7.1.4}=\left(\left\langle uxvuxvvux\right\rangle \right)$.
We notice that $\gamma\colon\left(U_{5},N_{5}\right)\to\left(U_{7.1.4},N_{7.1.4}\right)$
defined by $u\mapsto ux,v\mapsto v$ satisfies $\mathcal{F}_{\gamma}\left(\Gamma_{5}\to\Delta_{5}\right)=\Gamma_{7.1.4}\to\Delta_{7.1.4}$,
hence this problem is contained in Case 5.
\item [Case 8.]Here $\Gamma_{8}\left(xuvuvx^{-1}yvuy^{-1}\right)\twoheadrightarrow\Delta_{8}$
and$W\left(\Gamma_{8}\right)\backslash N_{8}=\{x^{-1}.y^{-1}\}$.
We split by $x^{-1}.y^{-1}$.
\item [Case 8.1.] $\psi_{8.1}=\psi_{x^{-1}.y^{-1}}^{1}$, $N_{8.1}=\{v.u^{-1},u.v^{-1},x.u^{-1},v.x,\,y.v^{-1},u.y,x^{-1}.y^{-1}\}$;
$\Gamma_{8.1}=\Gamma_{8},\Delta_{8.1}=\Delta_{8}$ and $W\left(\Gamma_{8.1}\right)\backslash N_{8.1}=\varnothing$
hence the subcase resolves positively.
\item [Case 8.2.] $\psi_{8.2}=\psi_{x^{-1}.y^{-1}}^{2}$, $x\mapsto t^{-1}x,\,y\mapsto t^{-1}y$
and $N_{8.2}=\{v.u^{-1},u.v^{-1},x.u^{-1},v.x,\,y.v^{-1},u.y,\allowbreak t^{-1}.y^{-1},t^{-1}.x^{-1},x^{-1}.y^{-1}\}$;
$\Gamma_{8.2},\Delta_{8.2}$ are conjugate to $\Gamma_{8},\Delta_{8}$
and $W\left(\Gamma_{8.2}\right)\backslash N_{8.2}=\varnothing$ hence
the subcase resolves positively.
\item [Case 8.3.] $\psi_{8.3}=\psi_{x^{-1}.y^{-1}}^{3}$, $x\mapsto yx$
and $N_{8.3}=\{v.u^{-1},u.v^{-1},x.u^{-1},v.x,\,y.v^{-1},u.y,y.x^{-1}\}$.
We notice that $\gamma\colon\left(U_{7},N_{7}\right)\to\left(U_{8.3},N_{8.3}\right)$
defined by $u\mapsto u,x\mapsto x,v\mapsto v$ satisfies $\mathcal{F}_{\gamma}\left(\Gamma_{7}\to\Delta_{7}\right)=\Gamma_{8.3}\to\Delta_{8.3}$,
hence this problem is contained in Case 7.
\item [Case 8.4.] $\psi_{8.4}=\psi_{x^{-1}.y^{-1}}^{4}$, $y\mapsto xy$
and $N_{8.4}=\{v.u^{-1},u.v^{-1},x.u^{-1},v.x,\,y.v^{-1},u.y,x.y^{-1}\}$.
We notice that $\gamma\colon\left(U_{6},N_{6}\right)\to\left(U_{8.4},N_{8.4}\right)$
defined by $u\mapsto u,y\mapsto y,v\mapsto v$ satisfies $\mathcal{F}_{\gamma}\left(\Gamma_{6}\to\Delta_{6}\right)=\Gamma_{8.4}\to\Delta_{8.4}$,
hence this problem is contained in Case 6.
\item [Case 8.5.] $\psi_{8.5}=\psi_{x^{-1}.y^{-1}}^{5}$, $y\mapsto x$
and $N_{8.4}=\{v.u^{-1},u.v^{-1},x.u^{-1},v.x,\,x.v^{-1},u.x\}$.
We notice that $\gamma\colon\left(U_{5},N_{5}\right)\to\left(U_{8.4},N_{8.4}\right)$
defined by $u\mapsto u,v\mapsto v$ satisfies $\mathcal{F}_{\gamma}\left(\Gamma_{5}\to\Delta_{5}\right)=\Gamma_{8.5}\to\Delta_{8.5}$,
hence this problem is contained in Case 5.
\item [Case 1.] We notice that $\Gamma_{1}\twoheadrightarrow\Delta_{1}$
and that $W\left(\Gamma_{1}\right)\backslash N_{1}=\varnothing$,
hence the case resolves positively.
\end{casenv}
\begin{figure}[h]
\begin{centering}
\includegraphics[scale=0.8]{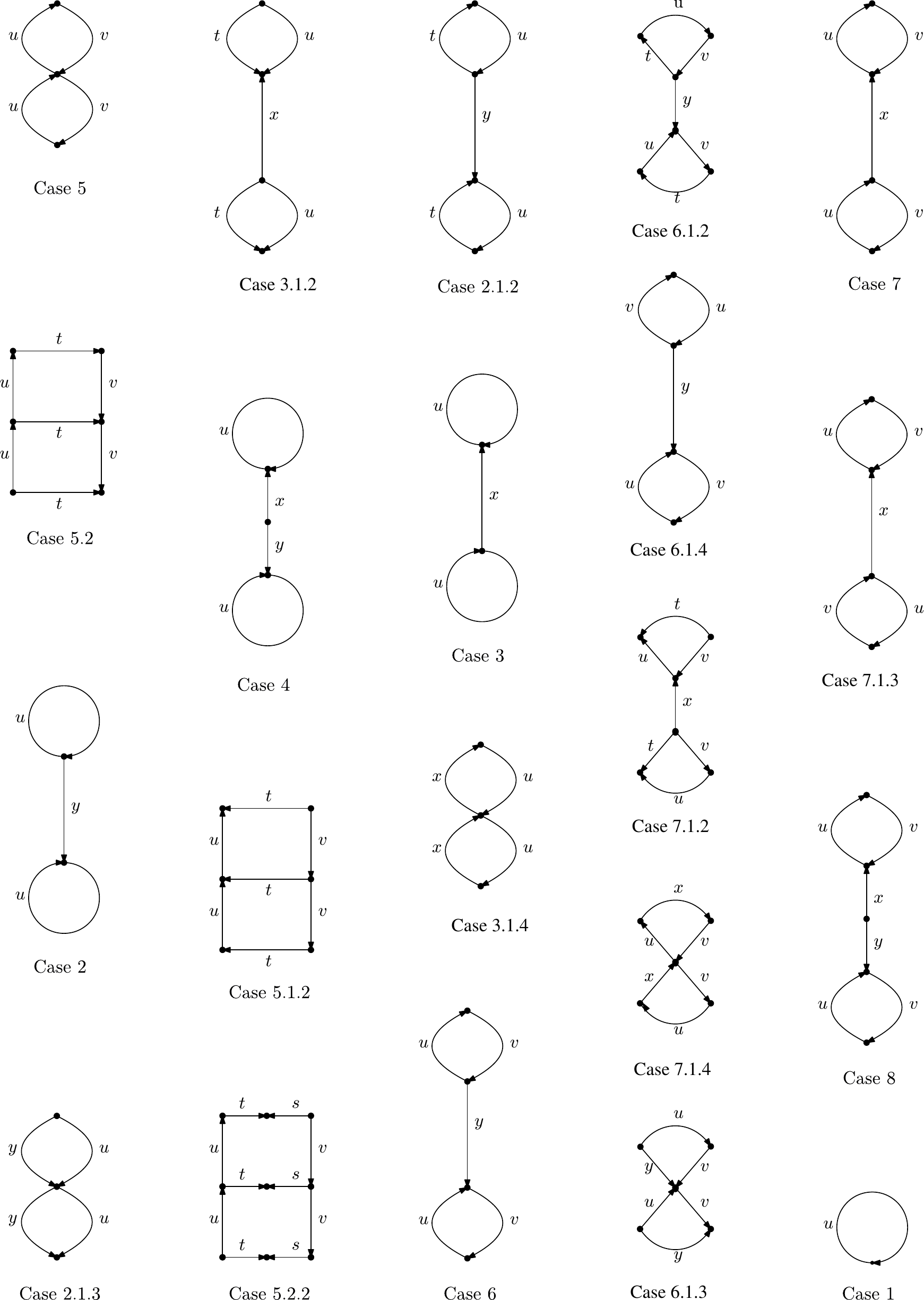}\bigskip{}
\par\end{centering}
\caption{\label{fig:The-graphs-}The graphs $\Delta_{i.k.j}=\protect\core\mathcal{F}_{\psi_{i.k.j}}\left(\Delta_{i,k}\right)$
arising in the analysis of the counterexample.}
\end{figure}

\section{\label{sec:TBD-finiteness}Stencil-finiteness}

Let $\mathscr{P}=\{\Gamma\to\Delta\,|\,\left(U,N_{U}\right)\}$ be
a surjectivity problem. In this section we define and examine a property
of the graph $\Gamma$, that is necessary for the recursive splitting
of $\mathscr{P}$ (as described in §\ref{sec:General-setting}) to
end in a finite set of equivalent ambiguous cases. Examining Example
\ref{exa:comutator} we see that all the graphs $\left\{ \core\mathcal{F}_{\varphi}\left(\Gamma\left(xyx^{-1}y^{-1}\right)\right)|\varphi\in\hom\left(\left(U,N_{U}\right),\left(X,W_{X}\right)\right)\right\} $
are contained in two stencil spaces $\left\{ \mathcal{F}_{\varphi}\left(\Gamma\left(xyx^{-1}y^{-1}\right)\right)|\varphi\in\hom\left(\left(U_{1},N_{U_{1}}\right),\left(X,W_{X}\right)\right)\right\} $
and $\left\{ \mathcal{F}_{\varphi}\left(\Gamma\left(xtyx^{-1}t^{-1}y^{-1}\right)\right)|\varphi\in\hom\left(\left(U_{2},N_{U_{2}}\right),\left(X,W_{X}\right)\right)\right\} $.
This is a reformulation of a classic result of Wicks \cite{MR0142610}
about the commutator, as we explain below. A closer examination of
the computation in §\ref{sec:The-counterexample} shows that there
are five stencil spaces (up to conjugation) that include all other
stencil spaces: Cases 1, 2.1, 5.1.1, 5.1.2, and 6.1. The set of graphs
$\left\{ \core\mathcal{F}_{\varphi}\Gamma\left(\left\langle bbaba^{-1}\right\rangle \right)|\varphi\in\hom\left(\left(\left\{ a,b\right\} ,\emptyset\right),\left(X,W_{X}\right)\right)\right\} $
is the union of the stencil spaces $\bigcup_{i\in I}\left\{ \mathcal{F}_{\varphi}\left(\Gamma_{i}\right)|\varphi\in\hom\left(\left(U_{i},N_{U_{i}}\right),\left(X,W_{X}\right)\right)\right\} $
with $I=\{1,\,2.1,\,5.1.1,\,5.1.2\,,6.1\}$ the set of indices of
stencil space cases. This fact, formulated differently, appears in
\cite{MR285589} (where Meskin attributes it to unpublished work by
Newman). In fact, he shows more generally that this is true for all
Baumslag-Solitar relators $b^{l}ab^{m}a^{-1}$ $\left(l,m\in\mathbb{Z}\right)$.
Technically this generalizes Wicks' result, as the commutator has
$l=1,m=-1$. This leads us to define the notion of stencil-finiteness
below. Even though Meskin and Wicks did not define this notion, each
of them showed that some word satisfies it. They were interested in
words $w$ with this property because it implies an easy algorithm
to determine whether there is a solution to the equation $w\left(\oton yn\right)=a$
with $a\in F_{X}$ and unknowns $\oton yn\in F_{Y}$. First we define
the following poset:
\begin{defn}
Let $X$ be a countably infinite set, and define an equivalence relation
between folded $X$-labeled graphs: $\Gamma\sim\Delta$ iff there
exists non-degenerate homomorphisms $\varphi,\psi\colon F_{X}\to F_{X}$
such that\ $\left(\varphi,\Gamma\right)$ and $\left(\psi,\Delta\right)$
are stencils and $\mathcal{F}_{\varphi}\left(\Gamma\right)=\Delta$,
$\mathcal{F}_{\psi}\left(\Delta\right)=\Gamma$. On the set of equivalence
classes we define $[\Gamma]\leq[\Delta]$ iff there exists a non-degenerate
homomorphism $\varphi\colon F_{X}\to F_{X}$ such that\ $\left(\varphi,\Delta\right)$
is a stencil and $\mathcal{F}_{\varphi}\left(\Delta\right)=\Gamma$.
\end{defn}

\begin{defn}
\label{prob:Let--be}We say that a folded $Y$-labeled graph $\Delta$
has \textit{stencil-finiteness} if the set $\left\{ \left[\core\left(\mathcal{F}_{\varphi}\left(\Delta\right)\right)\right]|\varphi\in\hom\left(\left(Y,\varnothing\right),(X,W_{X})\right)\right\} $
has a finite number of maximal elements.
\end{defn}

This property can be translated to a property of words, subgroups
and homomorphisms of free groups by taking the corresponding graphs.
Not all words have this property: Using the methods described in this
paper one can show that the word $x^{3}y^{2}$ does not have this
property. This is a lengthy computation and not the subject matter
of the paper, and is therefore omitted. Is stencil finiteness a sufficient
condition for the process ending? We do not know the answer. A priori,
the graph $\Gamma$ may have stencil finiteness while $\Delta$ doesn't,
and while the graphs $\core\mathcal{F}_{\varphi}\left(\Gamma\right)$
could repeat themselves, the graphs $\core\mathcal{F}_{\varphi}\left(\Delta\right)$
can be distinct and thus lead to infinite distinct ambiguous cases.
We do not have an explicit example for this mainly because there aren't
many graphs known to have stencil finiteness. The words known to have
stencil finiteness are $b^{l}ab^{m}a^{-1}$ with $l,m\in\mathbb{Z}$
(this includes the commutator), primitive words. One can show that
powers of a word with this property\textcolor{red}{{} }have it as well.

We end with some open questions:
\begin{enumerate}
\item Are there any other words with this property? Are there core graphs
of subgroups with rank $\geq2$ that have this property?
\item Is a surjectivity problem $\mathscr{P}=\left\{ \Gamma\to\Delta|\left(U,N_{U}\right)\right\} $
such that $\Gamma$ has stencil finiteness decidable?
\item Is stencil finiteness a decidable property?
\item Is there a purely algebraic interpretation of the property of an extension
being onto on every base?
\end{enumerate}
\bibliographystyle{plain}
\bibliography{mybib}

\end{document}